\documentclass[12pt,reqno]{amsart}
\usepackage[a4paper,twoside,left=2cm,right=2cm,top=3.1cm,bottom=2.3cm]{geometry}

\usepackage[english]{babel}
\usepackage[latin1]{inputenc}       
\usepackage{textcomp}
\usepackage{times}
\usepackage{colortbl}
\usepackage{xcolor}
\usepackage{amsmath, amssymb, amsthm}
\usepackage{mathtools}
\usepackage{mathrsfs}
\usepackage{bbm}
\usepackage{cases}

\usepackage{graphicx} 
\usepackage{svg}
\usepackage{booktabs}
\usepackage{siunitx}
\usepackage{float} 
\usepackage{subfigure} 

\usepackage{url}
\usepackage{enumitem}
\usepackage{color}
\usepackage{epstopdf}

\usepackage{hyperref} 

\usepackage[alphabetic]{amsrefs}

\newtheorem{theorem}{Theorem}[section]
\newtheorem{lemma}[theorem]{Lemma}
\newtheorem{corollary}[theorem]{Corollary}
\newtheorem{proposition}[theorem]{Proposition}

\theoremstyle{definition}
\newtheorem{definition}[theorem]{Definition}

\theoremstyle{remark}
\newtheorem{remark}[theorem]{Remark}

\numberwithin{equation}{section}

\newcommand{\R}{\mathbb{R}}
\newcommand{\N}{\mathbb{N}}
\newcommand{\OU}{\mathscr{L}}

\newcommand{\diam}{\mathrm{diam}}
\newcommand{\tr}{\mathrm{Tr}}
\newcommand{\dd}{\,d}
\newcommand{\dist}{\mathrm{dist}}

\newcommand{\Sub}{J^{2,-}}
\newcommand{\bracket}[1]{\left( #1\right)}

\newcommand{\av}[1]{\left\vert #1\right\vert}
\newcommand{\aV}[1]{\left\Vert #1\right\Vert}

\begin{document}

\title{To $1/2$-logconcavity and beyond: Geometric properties of Dirichlet eigenfunctions}

\author{Lei Qin}
\address{Lei Qin, Institute of Mathematics,
	Hunan University, Changsha, 410082, China}
\email{qlhnumath@hnu.edu.cn}

\author{Jin Sun}
\address{Jin Sun, School of Mathematical Sciences, Fudan University, 200433, Shanghai, China}
\email{jsun22@m.fudan.edu.cn}

\author{Kui Wang}
\address{Kui Wang, School of Mathematical Sciences, Soochow University, 215006, Suzhou, China}
\email{kuiwang@suda.edu.cn}

\subjclass[2020]{Primary 35J25; Secondary 35E10, 26B25, 35D40}

\begin{abstract}
We prove that, on a bounded open convex domain $\Omega\subset\mathbb{R}^n$, the first Dirichlet eigenfunction of the Laplacian or the Ornstein--Uhlenbeck operator is $\alpha$-logconcave for every $\alpha\in(0,1/2]$. This extends the recent $1/2$-logconcavity theorem of Crasta--Fragal\`{a} for the Laplacian to the weighted Gaussian setting and, simultaneously, to a broader range of exponents. More precisely, if $u$ denotes the first eigenfunction normalized by $\|u\|_\infty=1$, then for every $\alpha\in(0,1/2]$, the function $-\bigl(-\log(\kappa u(x))\bigr)^{\alpha}$ is concave in $\Omega$ provided the scaling parameter $\kappa$ lies below an explicit threshold $\kappa_\alpha(\Omega)\in(0,1)$, which depends on the first Dirichlet eigenvalue and on the diameter of~$\Omega$. For the Ornstein--Uhlenbeck operator, $\kappa_\alpha(\Omega)$ also depends on the distance between $\Omega$ and the origin. Moreover, we establish a local counterpart: for every $\kappa\in(0,1)$, the function $\bigl(-\log(\kappa u)\bigr)^{\alpha}$ is convex on a convex neighborhood $\Omega_\kappa$ of the unique maximum point of~$u$. We also provide counterexamples showing that unscaled $1/2$-logconcavity may fail for the first Dirichlet eigenfunction of a Schr\"odinger operator with a smooth convex potential, and for the first Dirichlet eigenfunction of a weighted Laplacian associated with an affine log-concave weight.
\end{abstract}

\keywords{Log-concavity; The first Dirichlet eigenfunction; Ornstein-Uhlenbeck operator}

\maketitle

\baselineskip18pt

\parskip3pt

\section{Introduction}\label{sec:intro}
The study of logconcavity of ground states for elliptic operators on convex
domains is a classical subject, tracing back at least to the celebrated result by
Brascamp--Lieb~\cite{BrascampLieb1976} for the Laplacian and Schr\"odinger operators with a convex
potential.  In this paper, we focus on
two basic problems: the standard Dirichlet eigenvalue problem for the
Laplacian and the corresponding problem for the Ornstein--Uhlenbeck
operator.

The Dirichlet eigenvalue problem for the Laplacian reads
\begin{equation}\label{eq:evp_1}
  -\Delta u = \lambda_0\,u \quad\text{in }\Omega,
  \qquad u=0 \quad\text{on }\partial\Omega,
\end{equation}
where $\lambda_0=\lambda_0(\Omega)>0$ denotes the first eigenvalue.

For the Ornstein--Uhlenbeck operator
\begin{equation}\label{eq:OU-def}
  \mathcal{L} u := \Delta u - x\cdot\nabla u,
  \qquad x\in\R^n,
\end{equation}
defined on a bounded open convex domain $\Omega\subset\R^n$, the
associated Dirichlet eigenvalue problem reads
\begin{equation}\label{eq:evp_2}
  -\mathcal{L} u = \lambda_1\,u \quad\text{in }\Omega,
  \qquad u=0 \quad\text{on }\partial\Omega,
\end{equation}
with $\lambda_1=\lambda_1(\Omega)>0$ denoting the first eigenvalue.

Both eigenfunctions are commonly referred to as the
\emph{ground state} of the operator with Dirichlet boundary conditions.
For a unified treatment, we shall investigate~\eqref{eq:evp_1}
and~\eqref{eq:evp_2} as instances of
\begin{equation}\label{eq:evp}
  -\OU_b u = \lambda_b\,u \quad\text{in }\Omega,
  \qquad u=0 \quad\text{on }\partial\Omega,
\end{equation}
where 
\begin{equation*}
    \OU_b u := \Delta u - b\,x\cdot\nabla u
\end{equation*}
with parameter
$b\in\{0,1\}$ (the Laplacian case corresponds to $b=0$ and the
Ornstein--Uhlenbeck case to $b=1$).  Throughout, $\lambda_b>0$ denotes the
first Dirichlet eigenvalue of~$-\OU_b$ on~$\Omega$ and~$u$ is the
corresponding eigenfunction, which may be chosen strictly positive
in~$\Omega$ and normalized so that $\max_{\overline\Omega}u=1$.

The classical result of Brascamp--Lieb~\cite{BrascampLieb1976} ensures
that the ground state of the Laplacian on a bounded convex domain is
log-concave. Analogous conclusions are known to hold in the drifted
setting~\cites{ColesantiFranciniLivshytsSalani2024, SunWang2025, Qin2026}, 
for the Schr\"{o}dinger operator \cites{Colesanti2025,Carbotti2026} 
and in many other settings \cites{Sakaguchi1987,CrastaFragala2021}.
This property is also deeply connected with the Brunn--Minkowski type inequalities (see, e.g., \cites{Colesanti2005,CrastaFragala2020}) and the fundamental gap (see, e.g., \cites{AndrewsClutterbuck2011, ni2013, SetoWangWei2019}). 
More recently, Ishige--Salani--Takatsu \cites{IshigeSalaniTakatsu2020,IshigeSalaniTakatsu2022,IshigeSalaniTakatsu2024} have systematically developed the framework of generalized $F$-concavity, investigating whether the ground states satisfy \emph{stronger} concavity conditions.

Following the framework of~\cite{IshigeSalaniTakatsu2020}, we introduce the
family of \emph{$\alpha$-log transformations}
\[
  L_\alpha(s):=-(-\log s)^\alpha,
  \qquad \alpha\in(0,1],\quad s\in(0,1].
\]
\begin{definition}
A bounded non-negative function $u$ with
$\max_{\Omega}u=1$ is said to be \emph{scaled $\alpha$-logconcave with
constant $\kappa_0>0$} in~$\Omega$ if for every $\kappa\in(0,\kappa_0]$
the function $L_\alpha(\kappa u)$ is concave in~$\Omega$, i.e.,
    \begin{equation}\label{eq:alpha-logconc}
        L_\alpha\!\bracket{\kappa u((1-t)x+ty)}
        \ge
        (1-t)\,L_\alpha(\kappa u(x))
        +t\,L_\alpha(\kappa u(y))
    \end{equation}
holds for all $x,y\in\Omega$ and $t\in[0,1]$.  When $\kappa_0=1$,
    we say that $u$ is \emph{unscaled $\alpha$-logconcave} in~$\Omega$.
\end{definition}

For $\alpha=1$, this is precisely the usual notion of logconcavity, and
scaled logconcavity trivially implies logconcavity.  For every
$\beta\ge\alpha$, the function $f(t):=-(-t)^{\beta/\alpha}$ is increasing
and concave on $(-\infty,0]$, so $L_\beta(\kappa u)=f\circ
L_\alpha(\kappa u)$ inherits concavity from $L_\alpha(\kappa u)$.
Consequently, $\alpha$-logconcavity implies $\beta$-logconcavity, and
hence, for $0<\alpha<1$, $\alpha$-logconcavity is a strictly stronger
property than logconcavity.

To motivate the threshold $\alpha=1/2$, recall the two model kernels
to which the present problems are related.
For the heat kernel (centered at the origin)
\begin{align*}
	H(x,t):=\frac{1}{\bracket{4\pi t}^{{n}/{2}}}\exp\bracket{-\frac{\av{x}^2}{4t}},
\end{align*}
and for any fixed $t>0$ and every sufficiently small $\kappa>0$, the function
\begin{equation*}
	-(-\log (\kappa H(x,t)))^{1/2} = -\bracket{\frac{\av{x}^2}{4t}-\log\bracket{\bracket{4\pi t}^{-{n}/{2}}\kappa}}^{1/2}
\end{equation*}
is concave in $\R^n$.

The Mehler kernel, which serves as the fundamental solution for the
Ornstein--Uhlenbeck heat equation, is given by
\begin{align*}
    M(x,y,t):=\frac{1}{\bracket{1-e^{-2t}}^{{n}/{2}}}\exp\bracket{-\frac{1}{2}\frac{e^{-2t}(\av{x}^2+\av{y}^2)-2e^{-t}x\cdot y}{1-e^{-2t}}}.
\end{align*}
Evaluating the Mehler kernel at $y=0$, we obtain the explicit formula
\begin{align*}
	M(x,t):=\frac{1}{\bracket{1-e^{-2t}}^{{n}/{2}}}\exp\bracket{-\frac{1}{2}\frac{e^{-2t}\av{x}^2}{1-e^{-2t}}}.
\end{align*}
For any fixed $t>0$ and every sufficiently small $\kappa>0$,
\begin{equation*}
	-(-\log (\kappa M(x,t)))^{1/2} = -\bracket{\frac{1}{2}\frac{e^{-2t}\av{x}^2}{1-e^{-2t}}-\log\bracket{\bracket{1-e^{-2t}}^{-{n}/{2}}\kappa}}^{1/2}
\end{equation*}
is also concave in $\R^n$.  Both kernels are therefore scaled
$\alpha$-logconcave precisely for $1/2\le\alpha\le 1$, and the exponent
$\alpha=1/2$ is critical.  It is consequently natural to ask whether the
ground state of the Laplacian or the Ornstein--Uhlenbeck operator on
a bounded convex domain inherits this scaled $\alpha$-logconcavity, and
in particular whether this property is preserved down to $\alpha=1/2$ and beyond.

For the Laplacian, Crasta and Fragal\`{a}~\cite{CrastaFragala2026}
recently proved that the ground state on a bounded convex
$\Omega\subset\R^n$ is scaled $1/2$-logconcave, thereby resolving a
question that the heat-flow approach
of~\cite{IshigeSalaniTakatsu2020} had left open.  Our principal aim is
to extend their result in two directions.  First, we admit the
Ornstein--Uhlenbeck operator alongside the standard Laplacian.  Second,
we obtain scaled $\alpha$-logconcavity for the entire subcritical range
$0<\alpha\le 1/2$.

Our first main result asserts that, after rescaling by a small
factor $\kappa$, the ground state is genuinely $\alpha$-logconcave on the
whole convex domain.  The explicit threshold
$\kappa_\alpha(\Omega)$ below depends on the geometry and
on the eigenvalue. The Laplacian case for $\alpha=1/2$ is proved in~\cite{CrastaFragala2026}*{Theorem 1}.

\begin{theorem}[Global $\alpha$-logconcavity]\label{thm:global}
Let $0<\alpha\le 1/2$. Let $\Omega\subset\R^n$ be open, bounded, and convex, and let $u$ be the first Dirichlet eigenfunction to the problem \eqref{eq:evp}, normalized so that $\max_{\overline\Omega}u=1$.  Write
$\lambda_b=\lambda_b(\Omega)$ for the corresponding
eigenvalue and $\overline\lambda_b$ for the first Dirichlet
eigenvalue to the operator $-\frac{d^2}{ds^2} + bs\cdot \frac{d}{ds} $ on the interval $(-D_\Omega/2,D_\Omega/2)$, where
$D_\Omega=\diam(\Omega)$.  Define $\kappa_\alpha(\Omega)$ by
\begin{equation}\label{eq:threshold}
  -\log\kappa_\alpha (\Omega) := \max\left\{\frac{2^{\beta}(\beta-1)}{(2^{\beta}-1)\beta}\,,\frac{\overline\lambda_b D_\Omega^2}{2^{\beta-2}\beta(\beta-1)}\,,\frac{(\beta-1)2^{2\beta-1}\lambda_b}{\beta\,\overline{\lambda}_b}\,,\frac{(\beta-1)2^{3\beta-2}b\max_{\Omega}\av{x}^2}{\beta\overline{\lambda}_b}\right\},
\end{equation}
where $\beta=1/\alpha$.
Then for every $\kappa\in(0,\kappa_\alpha(\Omega)]$,
\begin{equation}\label{eq:global-concavity}
  L_{\alpha}\!\bracket{\kappa u((1-t)x+ty)}
  \ge
  (1-t)L_{\alpha}(\kappa u(x))
  +tL_{\alpha}(\kappa u(y))
\end{equation}
holds for all $x,y\in\Omega$ and $t\in[0,1]$.
\end{theorem}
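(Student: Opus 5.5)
We set $w:=(-\log(\kappa u))^{\beta^{-1}}$ with $\beta=1/\alpha\ge 2$, so that \eqref{eq:global-concavity} says exactly that $w$ is convex in $\Omega$. Equivalently, $v:=-\log(\kappa u)=w^\beta$ satisfies $v\ge -\log\kappa>0$ and $v\to+\infty$ at $\partial\Omega$. From \eqref{eq:evp}, $v$ solves
\[
\Delta v-|\nabla v|^2-b\,x\cdot\nabla v=\lambda_b .
\]
Substituting $v=w^\beta$ yields a quasilinear equation for $w$:
\[
\Delta w=\beta w^{\beta-1}|\nabla w|^2-(\beta-1)\frac{|\nabla w|^2}{w}+b\,x\cdot\nabla w+\frac{\lambda_b}{\beta}w^{1-\beta}.
\]

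We prove convexity of $w$ through the convexity function
\[
C(x,y,t)=(1-t)w(x)+tw(y)-w((1-t)x+ty)
\]
on $\Omega\times\Omega\times[0,1]$, in the spirit of Korevaar and Kennington, or equivalently via the viscosity-solution concavity maximum principle used by Crasta--Fragal\`a. Suppose $\min C<0$. Since $w\to\infty$ at $\partial\Omega$ while $w$ stays bounded at interior points, the minimum cannot escape to the boundary. This requires a quantitative boundary blow-up argument: near $\partial\Omega$, $w$ is large in a way that forces $C\ge0$ there.

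At an interior minimum point $(x,y,t)$ with $z=(1-t)x+ty$, the first-order conditions give $\nabla w(x)=\nabla w(y)=\nabla w(z)=:p$. The second-order conditions, tested with the usual matrix inequality $(1-t)^{-1}\Delta w(x)$-type combinations, give
\[
\Delta w(z)\le\Bigl(\tfrac{1}{1-t}\cdots\Bigr),
\]
which in the standard harmonic-mean form reads: if $\Delta w=F(w,\nabla w,x)$, a contradiction follows once $F(\cdot,p,\cdot)$ is jointly "harmonic concave" in the appropriate sense. Concretely, we need the right-hand side, viewed as a function of $(w,x)$ at fixed $p$, to satisfy a Kennington-type structural condition, for instance convexity of $(s,x)\mapsto 1/F(s,p,x)$ when $F>0$. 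Since $w(z)<(1-t)w(x)+tw(y)$ at the minimum, we also need monotonicity of $F$ in $s$ with the correct sign.

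Checking this structure condition is the heart of the proof, and the main obstacle. The term $\beta w^{\beta-1}|p|^2$ is increasing and convex in $w$, which is favourable. The term $-(\beta-1)|p|^2/w$ and the eigenvalue term $\frac{\lambda_b}{\beta}w^{1-\beta}$ are problematic: they are decreasing in $w$, and harmonic concavity fails unless $w$ is large compared to $|p|$-dependent quantities. The drift term $b\,x\cdot p$ is affine in $x$, so it is fine up to a remainder controlled by $b\max_\Omega|x|^2$.

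This is exactly where the lower bound $w\ge(-\log\kappa)^{\alpha}$ enters. We dominate the bad terms by the good term $\beta w^{\beta-1}|p|^2$ whenever $|p|$ is not too small. For small $|p|$, we instead need a lower bound on $|\nabla v|$ away from the maximum point, or else handle that region separately.

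For this we plan to use gradient estimates coming from one-dimensional comparison. The eigenfunction $\bar u$ of $-\frac{d^2}{ds^2}+bs\frac{d}{ds}$ on $(-D_\Omega/2,D_\Omega/2)$ controls $u$. Payne/Andrews--Clutterbuck-type modulus arguments give bounds of the form
\[
|\nabla\log u|\ \gtrsim\ \overline\lambda_b\cdot\operatorname{dist}
\quad\text{and}\quad
\log(1/u)\lesssim \overline\lambda_b D_\Omega^2\text{-type quantities}.
\]
These are the sources of the ratios $\lambda_b/\overline\lambda_b$, $\overline\lambda_b D_\Omega^2$ and $b\max|x|^2/\overline\lambda_b$ in \eqref{eq:threshold}. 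The remaining constant $2^\beta(\beta-1)/((2^\beta-1)\beta)$ should come from an elementary inequality comparing $w(z)$ with $w(x),w(y)$ via $2^\beta$ factors, for example using $w(z)\ge \tfrac12\max(w(x),w(y))$-type bounds when $v\ge-\log\kappa$.

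Each of the four conditions in \eqref{eq:threshold} will be used to absorb one bad term. Finally, for $\kappa<\kappa_\alpha(\Omega)$ the conclusion follows directly, since decreasing $\kappa$ only increases $v$. We expect the bookkeeping in the structure inequality, particularly the small-gradient regime near the maximum point, to be the hardest step.
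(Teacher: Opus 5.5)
You have the transformed equation $\Delta w=F(w,\nabla w,x)$ right, and a Korevaar--Kennington concavity-function argument could plausibly replace the paper's convex-envelope route. However, the proposal leaves the decisive step open. The structure conditions are obtained by absorbing $-(\beta-1)|p|^2/s$, $\frac{\lambda_b}{\beta}s^{1-\beta}$ and $b\,x\cdot p$ into $\beta s^{\beta-1}|p|^2$. That absorption needs a lower bound $|p|^2\gtrsim\overline\lambda_b M_0^{2-\beta}$ at the bad configuration. For small $|p|$ the monotonicity you need genuinely fails, since $\partial_sF\approx-\frac{(\beta-1)\lambda_b}{\beta}s^{-\beta}<0$. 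The tool you propose, a pointwise bound $|\nabla v(x)|\gtrsim\overline\lambda_b|x-x_0|$, degenerates at the maximum point $x_0$, and you leave that regime unresolved. The missing idea is a \emph{two-point} bound on the common gradient. At an interior critical point of $C$ with $t\in(0,1)$ you have $\nabla w(x)=\nabla w(y)=p$, and $\partial_tC=0$ gives $w(y)-w(x)=\langle p,y-x\rangle$. Since $\nabla v=\beta w^{\beta-1}\nabla w$, the strong convexity $\nabla^2v\ge\overline\lambda_bI$ coming from the Andrews--Clutterbuck/Sun--Wang improved log-concavity gives, with $M_0<M_1$ the two values of $w$, $\beta(M_1^{\beta-1}-M_0^{\beta-1})(M_1-M_0)\ge\overline\lambda_b|y-x|^2\ge\overline\lambda_b(M_1-M_0)^2/|p|^2$. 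Hence $|p|^2\ge\overline\lambda_b/(\beta(\beta-1)M_1^{\beta-2})$. When $M_1>2M_0$, combine this with $|p|\ge(M_1-M_0)/D_\Omega$; that is where $\overline\lambda_bD_\Omega^2$ in \eqref{eq:threshold} comes from. The result is $|p|^2\ge\overline\lambda_b/(\beta(\beta-1)2^{\beta-2}M_0^{\beta-2})$ whenever $M_0\ge(-\log\kappa)^\alpha$ is large. So there is no separate small-gradient regime: two distinct points with a common small gradient and the affine relation cannot occur. Your account of the other constants is also off. The first comes from absorbing $-(\beta-1)|p|^2/s$, the third from the $\lambda_b$-terms, and the fourth from controlling $b(s+\eta)$ via $|s+\eta|\le\max_\Omega|x|\,|p|$; none comes from a bound like $w(z)\ge\frac12\max\{w(x),w(y)\}$.

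Three further points need repair. First, the condition you formulate, convexity of $(s,x)\mapsto 1/F(s,p,x)$, is false for $b=1$. Writing $F=g(s)+b\,x\cdot p$, along a direction $(\sigma,\xi)$ with $b\,p\cdot\xi=-g'(s)\sigma$ the second derivative of $1/F$ is $-g''(s)\sigma^2/F^2$. This is negative wherever $g''>0$, e.g.\ for $\beta>2$ and $s$ large. What is needed, and true, is convexity only along the segment from $(w(x),x)$ to $(w(y),y)$. There the affine relation gives $x\cdot p=s+\eta$ with $\eta$ fixed, so $F$ reduces to $G(s)=b(s+\eta)+(\beta s^{\beta-1}-\frac{\beta-1}{s})|p|^2+\frac{\lambda_b}{\beta s^{\beta-1}}$. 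One then checks $G>0$ and $2G'^2-GG''\ge0$ on $[M_0,M_1]$, as the paper does. Second, at a negative minimum of $C$ one has $w(z)>(1-t)w(x)+tw(y)$, not $<$. So you need $F(\cdot,p,z)$ strictly increasing on $[M_0,\infty)$, which again follows from the two-point bound. The paper needs no monotonicity at all, because $w_\kappa^{**}(x)=\sum_it_iw_\kappa(x_i)$ exactly. Third, the boundary step is only asserted. The points $x,y,z$ may approach $\partial\Omega$ simultaneously, and for a nonsmooth convex $\Omega$ excluding this needs a real boundary lemma plus approximation. The paper sidesteps this entirely. The envelope $w_\kappa^{**}$ blows up at $\partial\Omega$ by construction, and Proposition~\ref{prop:supersol} makes it a viscosity supersolution. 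Hence $\hat u_\kappa$ is a Lipschitz subsolution $-\OU_b\hat u_\kappa\le\lambda_b\hat u_\kappa$ in $W^{1,2}_0$ (weighted when $b=1$). The Rayleigh quotient and simplicity of $\lambda_b$ then force $\hat u_\kappa=u$, i.e.\ $w_\kappa^{**}=w_\kappa$.
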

For the Laplacian case, the threshold $\kappa_\alpha(\Omega)$ depends on $\alpha$, the first
Dirichlet eigenvalues $\lambda_b, \overline\lambda_b$, and the diameter $D_\Omega$ of~$\Omega$.
For the Ornstein-Uhlenbeck case, the threshold $\kappa_\alpha(\Omega)$ additionally depends on $\max_{\Omega}\av{x}$.

Our second result is local and unconditional in $\kappa\in(0,1)$:
the ground state is scaled $\alpha$-logconcave in a convex neighborhood of its
unique maximum point, and this neighborhood shrinks to the unique maximum point of~$u$ as $\kappa$ approaches $1$.  In particular, the local statement
captures the geometry that is invisible in the global one
near the critical value $\kappa=1$. The Laplacian case for $\alpha=1/2$ is proved in~\cite{CrastaFragala2026}*{Theorem 3}.

\begin{theorem}[Local $\alpha$-logconcavity]\label{thm:local}
Let $0<\alpha\le 1/2$. Let $\Omega\subset\R^n$ be open, bounded, and convex, and let $u$ be the first Dirichlet eigenfunction to the problem \eqref{eq:evp}, normalized so that $\max_{\overline\Omega}u=1$. 
For every $\kappa\in(0,1)$, set $w_\kappa:=(-\log(\kappa u))^{\alpha}$. Then there exists
$\overline u_\kappa\in(0,1)$ with
$\lim_{\kappa\to 1^-}\overline u_\kappa=1$, such that
$w_\kappa$ is convex on the convex sublevel set
\[
  \Omega_\kappa
  :=
  \bigl\{x\in\Omega:\, u(x)>\overline u_\kappa\bigr\}.
\]
In particular, \eqref{eq:global-concavity} holds for all $x,y\in\Omega_\kappa$ and $t\in[0,1]$.
\end{theorem}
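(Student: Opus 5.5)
We prove Theorem~\ref{thm:local} by a local second-order computation near the maximum point, using the Brascamp--Lieb type log-concavity that is already known for both $b=0$ and $b=1$.

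\emph{Setup.} Write $v:=-\log u\ge 0$. Then $v$ is convex in $\Omega$ (Brascamp--Lieb for $b=0$; the drifted analogues cited in the introduction for $b=1$). It vanishes exactly at the unique maximum point $x_0$ and satisfies
\[
\Delta v-|\nabla v|^2-b\,x\cdot\nabla v=\lambda_b .
\]
With $c:=-\log\kappa>0$ we have $w_\kappa=(c+v)^{\alpha}$. Setting $\beta=1/\alpha$ and $g:=c+v$, a direct computation gives
\[
D^2 w_\kappa=\alpha g^{\alpha-2}\Big(g\,D^2v-(1-\alpha)\nabla v\otimes\nabla v\Big).
\]
So convexity of $w_\kappa$ on a convex set is equivalent to
\[
(c+v)\,\langle D^2v\,\xi,\xi\rangle\ \ge\ (1-\alpha)\langle\nabla v,\xi\rangle^2 \qquad\text{for all }\xi .
\]

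\emph{Local estimate.} Since $c>0$, near $x_0$ the right-hand side is $O(|\nabla v|^2)=O(|x-x_0|^2)$. The left-hand side is at least $c\,\langle D^2v\,\xi,\xi\rangle$. Hence the key input is a \emph{strict} lower bound $D^2v(x_0)\ge\mu I$ with $\mu>0$. I would obtain this from strong log-concavity, i.e.\ from the strict convexity of $-\log u$ proved in those references via the constant rank theorem. Alternatively, it follows from the equation at $x_0$, since $\Delta v(x_0)=\lambda_b>0$, combined with the constant rank theorem, which gives full rank of $D^2v$ wherever $v$ is convex.

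\emph{Choice of $\overline u_\kappa$.} By continuity, $D^2v\ge(\mu/2)I$ and $|\nabla v|\le C|x-x_0|$ on a ball $B_r(x_0)$. Also, by quadratic growth, $v\ge(\mu/4)|x-x_0|^2$ there. The inequality then holds wherever
\[
c\mu/2\ \ge\ (1-\alpha)C^2|x-x_0|^2 ,
\]
that is, on $|x-x_0|\le\rho_\kappa:=\min\{r,\sqrt{c\mu/(2C^2)}\}$. Choose
\[
\overline u_\kappa:=\max\Big\{\max_{|x-x_0|=\rho_\kappa}u,\ \kappa\Big\}<1 .
\]
By log-concavity the superlevel set $\Omega_\kappa=\{u>\overline u_\kappa\}$ is convex, and by monotonicity of $v$ along rays from $x_0$ it is contained in $B_{\rho_\kappa}(x_0)$. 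Hence $w_\kappa$ is convex on $\Omega_\kappa$.

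\emph{Limit $\kappa\to1^-$.} In this limit $c\to0$, so $\rho_\kappa\to0$ and $\overline u_\kappa\to u(x_0)=1$. The concluding statement follows because $L_\alpha(\kappa u)=-w_\kappa$ is then concave along every segment in the convex set $\Omega_\kappa$, which is exactly \eqref{eq:global-concavity} there.

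\emph{Main obstacle.} The hardest step is securing the nondegeneracy $D^2v(x_0)>0$ for the Ornstein--Uhlenbeck case. Mere log-concavity is not enough: if $D^2v$ degenerates in some direction $\xi$, then along $\xi$ the term $\langle\nabla v,\xi\rangle^2$ could dominate. I would first try the constant rank theorem applied to $v$. If that is unavailable, the fallback is an expansion of $u$ near $x_0$: the Hessian $D^2u(x_0)\le0$ has trace $-\lambda_b<0$. Degenerate directions then still have $\langle\nabla v,\xi\rangle=O(|x-x_0|^2)$, because $\nabla u(x_0)=0$ and, in such a direction, analyticity gives higher vanishing. A careful Taylor comparison along such directions would then be needed.
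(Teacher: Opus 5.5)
Your argument is correct. Its core mechanism matches the paper's: the Hessian identity reduces convexity of $w_\kappa$ to $(a+v)\nabla^2 v\ge(1-\alpha)\nabla v\otimes\nabla v$, and the constant $a=-\log\kappa>0$ dominates near $x_0$, where $\nabla v$ vanishes. Where you differ is in how you secure and localize the lower bound on $\nabla^2 v$.

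The paper needs no constant rank theorem. The improved log-concavity estimate (Lemma~\ref{lem:improved-logconc}, in the form \eqref{eq:Hess-lower}) gives $\nabla^2 v\ge\overline\lambda_b I_n$ \emph{everywhere} in $\Omega$, for both $b=0$ and $b=1$, so your ``main obstacle'' does not arise. With this global bound the paper avoids your ball $B_r(x_0)$ and the local constants $\mu,C$. It sets $M(t)=\sup\{|\nabla v|^2: v\le t\}$, picks $\overline v_\kappa$ with $M(\overline v_\kappa)=a\overline\lambda_b/(2(1-\alpha))$, and takes $\Omega_\kappa=\{v<\overline v_\kappa\}$. This yields the explicit modulus $\nabla^2 w_\kappa\ge\frac{a\alpha\overline\lambda_b}{2}(a+v)^{\alpha-2}I_n$ and a neighborhood described intrinsically through $v$ and $\nabla v$.

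Your route needs only nondegeneracy at the single point $x_0$ plus ordinary log-concavity. Log-concavity gives convexity of the superlevel set, and your ray-monotonicity argument placing $\Omega_\kappa$ inside $B_{\rho_\kappa}(x_0)$ is sound. So your argument would apply to any log-concave ground state with $\nabla^2 v(x_0)>0$. The price is a non-explicit $\overline u_\kappa$, depending on $\mu$, $C$ and $r$.

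One caution about how you justify $\nabla^2 v(x_0)>0$: your ``alternative'' route does not work as stated. The equation at $x_0$ gives only $\tr\nabla^2 v(x_0)=\lambda_b>0$, i.e.\ rank at least one. The constant rank theorem only says the rank is constant, not that it is full. Getting full rank needs an extra step. For instance, a constant kernel direction $e$ would make $v$ affine on chords of $\Omega$ parallel to $e$, which contradicts $v\to+\infty$ at $\partial\Omega$. Citing strict log-concavity from the literature, or simply \eqref{eq:Hess-lower}, is the clean way to do this, and your Taylor/analyticity fallback is unnecessary.
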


Unlike the heat kernel, it is remarkable that the ground state on a bounded convex domain is also scaled $\alpha$-logconcave for every $\alpha\in(0,1/2]$. The Dirichlet eigenvalue problem
on a bounded domain differs from the heat kernel owing to the boundary condition.
Scaling by $\kappa<1$ replaces $v=-\log u$ by $a+v$ with $a=-\log\kappa>0$.
Near a maximum point one has $v=0$ and $\nabla v=0$, so the positive
term $(a+v)D^2v$ in~\eqref{eq:Hesscriterion} dominates locally and
forces $u$ to be $\alpha$-logconcave in a convex neighborhood of its
maximum point.  Globally, in contrast to the Gaussian decay of $H(x,t)$ or $M(x,t)$ on $\R^n$, the vanishing of $u$ on $\partial\Omega$ yields a
stronger log-concavity.

Without scaling, no such subcritical behaviour can be expected.  If
$\kappa=1$ and $x_0$ is the unique maximum of~$u$, then along a unit
vector~$e$,
\[
        v(x_0+te)=\frac12D^2v(x_0)[e,e]\,t^2+O(t^3),
\]
which implies
\[
        v(x_0+te)^\alpha=C_e|t|^{2\alpha}(1+o(1)),
        \qquad C_e>0.
\]
For $0<\alpha<1/2$, the one-dimensional function $|t|^{2\alpha}$ is
concave on each side of the origin rather than convex.  This is the reason why the scaling parameter $\kappa<1$ is unavoidable in Theorem~\ref{thm:global} and Theorem~\ref{thm:local}.

\medskip\noindent\textbf{Strategy of the proof.}\;
Our argument for Theorem~\ref{thm:global} adapts the convex envelope method
of~\cite{CrastaFragala2026} to both the Laplacian and the
Ornstein--Uhlenbeck operator and to the full subcritical range
$0<\alpha\le 1/2$. We transform the eigenvalue equation into a PDE for $w_\kappa=(-\log(\kappa u))^{\alpha}$ and aims to show that $w_\kappa$ coincides with its convex envelope $w_\kappa^{**}$.
Three ingredients are needed.

First, the improved logconcavity estimate of Andrews and
Clutterbuck~\cite{AndrewsClutterbuck2011} for the Laplacian, together
with its extension by Sun and Wang~\cite{SunWang2025} for the
Ornstein--Uhlenbeck ground state, provides a lower bound on the
gradient of $w_\kappa$ at the support points of the convex envelope
(Proposition~\ref{prop:grad-est}). Second, combining this
gradient bound with a careful analysis of the nonlinearity, we verify
that $w_\kappa^{**}$ is a viscosity supersolution to the same PDE as
$w_\kappa$, provided $\kappa\le\kappa_\alpha(\Omega)$
(Proposition~\ref{prop:supersol}). 
Third, a variational argument exploiting the simplicity of the first
eigenvalue yields $w_\kappa=w_\kappa^{**}$, completing the proof of Theorem~\ref{thm:global}
(Section~\ref{sec:conclusion}).

\medskip\noindent\textbf{Outline.}\; Section~\ref{sec:prelim} collects
the necessary preliminary lemmas.  Section~\ref{sec:main-proof}
contains the proof of Theorem~\ref{thm:global}.  In
Section~\ref{sec:local} we establish Theorem~\ref{thm:local}.  Finally,
Section~\ref{sec:counter} presents counterexamples showing that
$1/2$-logconcavity fails for the first Dirichlet eigenfunction of
general Schr\"odinger operators with smooth convex potentials, and
for weighted Laplacians with affine log-concave weights.

\section{Preliminaries}\label{sec:prelim}

Throughout this paper, $\Omega\subset\R^n$ is an open, bounded, and convex
domain, and $u\in C^\infty(\Omega)$ denotes the first Dirichlet
eigenfunction to the problem \eqref{eq:evp} , normalized so that
$\max_{\overline\Omega}u=1$, with eigenvalue $\lambda_b>0$.

\subsection{Gradient estimates}\label{subsec:grad-est}

The gradient estimate for the ground state will be used in the proof of Theorem~\ref{thm:global}. It is a standard consequence of elliptic
regularity: since $u\in C^2(\Omega)\cap C(\overline\Omega)$ and $\Omega$
is bounded and convex, the gradient $\nabla u$ is bounded on~$\Omega$.  Explicitly, there exists a constant $C= C(n,\Omega)>0$ such that
\begin{equation}\label{eq:grad_u_boudned}
    \aV{\nabla u}_{L^\infty(\Omega)}\le C.
\end{equation}

For the sake of completeness, we provide the proof for the Ornstein-Uhlenbeck case (the Laplacian case follows similarly).
\begin{theorem}
Let $\Omega \subset \mathbb{R}^n$ be a bounded convex domain with diameter $D_\Omega$. Suppose that $u \in C^2(\Omega) \cap C(\overline{\Omega})$ is the first Dirichlet eigenfunction of the operator $\mathcal{L} = \Delta - x \cdot \nabla$, satisfying $-\mathcal{L} u = \lambda_1 u$ in $\Omega$, $\max_{\overline\Omega}u=1$ and $u = 0$ on $\partial\Omega$. Then there exists a constant $C > 0$, depending only on $n$, $\lambda_1$, $D_\Omega$, and $\max_{\Omega}|x|$, such that $\|\nabla u\|_{L^\infty(\Omega)} \le C$.
\end{theorem}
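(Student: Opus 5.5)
We prove the bound by a barrier argument at the boundary, followed by a Bernstein-type maximum principle argument in the interior. Throughout, $0\le u\le 1$ by normalization, and $|x|\le R:=\max_{\Omega}|x|$ on $\overline\Omega$.

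\emph{Boundary gradient bound.} Since $\Omega$ is convex, every boundary point $x_0\in\partial\Omega$ has a supporting hyperplane. Choose a unit vector $\nu$ with $\Omega\subset\{x:(x-x_0)\cdot\nu>0\}$, and set $d(x):=(x-x_0)\cdot\nu\in[0,D_\Omega]$ on $\Omega$. The barrier is
\[
\psi(x):=\frac{1-e^{-Md(x)}}{1-e^{-MD_\Omega}},
\]
with $M$ large. A direct computation gives
\[
\mathcal{L}\psi=\frac{e^{-Md}}{1-e^{-MD_\Omega}}\bigl(-M^2-M\,x\cdot\nu\bigr)\le \frac{e^{-Md}}{1-e^{-MD_\Omega}}\bigl(-M^2+MR\bigr).
\]
We want $-\mathcal{L}\psi\ge\lambda_1\ge\lambda_1 u$. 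Using $e^{-Md}\ge e^{-MD_\Omega}$, this holds once
\[
(M^2-MR)\,e^{-MD_\Omega}\ge\lambda_1\bigl(1-e^{-MD_\Omega}\bigr).
\]
This is the main obstacle: the exponential factor works against us, so the barrier as written is not quite right. To fix it, work only in the strip $\{d<\delta\}$ and use the modified barrier
\[
\psi=\frac{1-e^{-Md}}{1-e^{-M\delta}},
\]
with $\delta$ small and $M$ large relative to $R$ and $\lambda_1$, chosen so that $M^2e^{-M\delta}\gtrsim\lambda_1+MR$. For example, take $M=1/\delta$ and let $\delta$ be small in terms of $\lambda_1$ and $R$. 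On the boundary of this region we have $\psi\ge u$: on $\partial\Omega$ because $u=0$, and on $\{d=\delta\}$ because $\psi=1\ge u$. Since $-\mathcal{L}(\psi-u)\ge\lambda_1(1-u)\ge0$, the maximum principle gives $u\le\psi$ near $x_0$, and so
\[
\partial_\nu u(x_0)\le \partial_\nu\psi(x_0)=\frac{M}{1-e^{-M\delta}}=:C_1.
\]
Since $u\ge0$ and $u=0$ on $\partial\Omega$, this yields $|\nabla u|\le C_1$ on $\partial\Omega$, interpreted as $\limsup$ if $\partial\Omega$ is not smooth. In that case, first approximate $\Omega$ from inside by smooth convex domains, or apply the comparison to difference quotients, and pass to the limit.

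\emph{Interior bound via Bernstein.} Set $Q:=|\nabla u|^2+\lambda_1u^2$. Bochner's formula for the drift Laplacian gives
\[
\mathcal{L}|\nabla u|^2=2|D^2u|^2+2\nabla u\cdot\nabla(\mathcal{L}u)+2|\nabla u|^2,
\]
where the final term comes from the Bakry--\'Emery tensor $D^2(|x|^2/2)=I$. Substituting $\mathcal{L}u=-\lambda_1u$ and adding $\mathcal{L}(\lambda_1u^2)=2\lambda_1|\nabla u|^2-2\lambda_1^2u^2$, we obtain
\[
\mathcal{L}Q=2|D^2u|^2+2|\nabla u|^2-2\lambda_1^2u^2.
\]
This is not sign-definite. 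So instead consider $Q_K:=|\nabla u|^2+Ku^2$ with $K$ large, which gives
\[
\mathcal{L}Q_K\ge(2-2\lambda_1+2K)|\nabla u|^2-2K\lambda_1u^2 .
\]
At an interior maximum of $Q_K$ this forces $|\nabla u|^2\le K\lambda_1u^2/(K+1-\lambda_1)\le\lambda_1$ whenever $K>\lambda_1$. Otherwise the maximum of $Q_K$ is attained on the boundary, where $Q_K\le C_1^2$. In either case
\[
|\nabla u|^2\le\max\{C_1^2,\,K+\lambda_1\},
\]
and this constant depends only on $\lambda_1$, $D_\Omega$ and $R$ (and trivially on $n$).
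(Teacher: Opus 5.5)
\textbf{Verdict.} Your outline is correct for smooth convex domains, but it has a genuine gap for general convex $\Omega$: the boundary input to your Bernstein argument is not established. There is also a small arithmetic slip.

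\textbf{Relation to the paper.} Your barrier step is the paper's first step. The ``obstacle'' you hit is self-imposed: when you compare on all of $\Omega$, the only boundary condition is $\psi\ge 0=u$ on $\partial\Omega$, so nothing requires $\psi\le 1$. The paper takes $\varphi_P=A(1-e^{-\gamma d_P})$ with $\gamma=\max_\Omega|x|+1$ and $A=\lambda_1\gamma^{-1}e^{\gamma D_\Omega}$, which gives $-\mathcal{L}\varphi_P\ge\lambda_1$ on all of $\Omega$. Your strip version also works.

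You genuinely depart from the paper in the second step. The paper takes the infimum over supporting hyperplanes to get $u\le C_0\,\dist(\cdot,\partial\Omega)$. It then applies the rescaled interior gradient estimate on $B_r(x_0)$ with $r=\dist(x_0,\partial\Omega)/2$. You instead run a global Bernstein argument.

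\textbf{The arithmetic slip.} Your Bochner identity is correct, including the Bakry--\'Emery term. The interior-maximum bound is misstated: $K\lambda_1u^2/(K+1-\lambda_1)\le\lambda_1$ is false when $\lambda_1>1$, since the factor $K/(K+1-\lambda_1)$ then exceeds $1$. This is harmless. Taking $K=\lambda_1$ gives $|\nabla u|^2\le\lambda_1^2$ at an interior maximum, and hence $|\nabla u|^2\le\max\{C_1^2,\lambda_1^2+\lambda_1\}$.

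\textbf{The gap.} Bernstein needs $\limsup_{x\to\partial\Omega}Q_K(x)\le C_1^2$, that is, control of $|\nabla u(x)|$ as $x$ approaches $\partial\Omega$.
\begin{itemize}
\item Your barrier gives only the one-sided bound $u\le C_1 d$ near each $x_0\in\partial\Omega$.
\item This becomes $|\nabla u(x_0)|\le C_1$ only if $u\in C^1(\overline\Omega)$, which in practice means smooth $\partial\Omega$.
\item The statement allows any bounded convex $\Omega$ with $u\in C^2(\Omega)\cap C(\overline\Omega)$. Your phrase ``interpreted as $\limsup$'' is exactly the step left unproved.
\item Turning $u\le C\,\dist(\cdot,\partial\Omega)$ into a gradient bound near a nonsmooth boundary is precisely what the paper's interior rescaling does.
\end{itemize}

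\textbf{Your proposed remedies.} Both are only named.
\begin{itemize}
\item Difference quotients do not apply directly, because $\mathcal{L}$ is not translation invariant: $u(\cdot+h)$ solves an equation with the extra drift $h\cdot\nabla u(\cdot+h)$.
\item Inner approximation by smooth convex $\Omega_k\uparrow\Omega$ can be made to work, but you must supply three facts. First, the smooth-case constant is uniform in $k$; it is, being monotone in $\lambda_1(\Omega_k)\le\lambda_1(\Omega_1)$, with $\max_{\Omega_k}|x|\le\max_\Omega|x|$. Second, $\lambda_1(\Omega_k)\downarrow\lambda_1(\Omega)$, so the final constant depends only on $\lambda_1(\Omega)$. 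Third, the zero-extended normalized eigenfunctions are uniformly Lipschitz. By Arzel\`a--Ascoli and interior estimates they converge to a nonnegative eigenfunction with maximum $1$, which equals $u$ by simplicity.
\end{itemize}

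\textbf{Shorter repair.} Your strip barrier already gives $u\le C_1\,\dist(\cdot,\partial\Omega)$ on all of $\Omega$. At the nearest boundary point $x_0$, use the supporting hyperplane orthogonal to $x-x_0$. Away from the strip, use $u\le 1$ together with $C_1\ge 1/\delta$. A rescaled interior estimate, or a localized Bernstein argument with a cutoff on $B_{\dist(x,\partial\Omega)/2}(x)$, then finishes the proof with no approximation. Once completed, your route has the advantage of explicit constants independent of $n$.
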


\begin{proof}
Let $K = \max_{\Omega}|x|$. Since $\Omega$ is a bounded convex domain, for any $P \in \partial\Omega$, there exists a supporting hyperplane $H_P$ such that $\Omega$ lies on one side of $H_P$. Let $n_P$ denote the inner unit normal to $H_P$. We define the distance from $x$ to $H_P$ as $d_P(x) = (x - P) \cdot n_P$. Note that $0 < d_P(x) \le D_\Omega$ for all $x \in \Omega$.

Consider the barrier function $\varphi_P(x) = A(1 - e^{-\gamma d_P(x)})$, where $A, \gamma > 0$ are constants to be determined. A direct computation yield
\[
\mathcal{L} \varphi_P = -A e^{-\gamma d_P} (\gamma^2 + \gamma x \cdot n_P).
\]
Since $x \cdot n_P \ge -|x| \ge -K$ and $e^{-\gamma d_P} \ge e^{-\gamma D_\Omega}$, it follows that
\[
\mathcal{L} \varphi_P \le -A e^{-\gamma D_\Omega} (\gamma^2 - \gamma K).
\]
We choose $\gamma = K + 1$ so that $\gamma^2 - \gamma K = \gamma > 0$. To ensure $-\mathcal{L} \varphi_P \ge \lambda_1$, we set $A = \lambda_1 \gamma^{-1} e^{\gamma D_\Omega}$. Consequently, $-\mathcal{L} \varphi_P \ge \lambda_1 \ge \lambda_1 u = -\mathcal{L} u$ in $\Omega$. On $\partial\Omega$, since $u = 0$ and $\varphi_P \ge 0$, we have $u \le \varphi_P$ on $\partial\Omega$. By the weak maximum principle for elliptic operators, we have $u(x) \le \varphi_P(x)$ for all $x \in \Omega$. Therefore,
\[
u(x) \le A(1 - e^{-\gamma d_P(x)}) \le A\gamma d_P(x).
\]
Taking the infimum over all $P \in \partial\Omega$ and using the fact that $\inf_{P \in \partial\Omega} d_P(x) = \text{dist}(x, \partial\Omega) := \delta(x)$, we establish an upper bound
\[
u(x) \le C_0 \delta(x), \quad \forall x \in \Omega,
\]
where $C_0 = A\gamma = \lambda_1 e^{(K+1)D_\Omega}$.

For any fixed point $x_0 \in \Omega$, set $r = \frac{1}{2}\delta(x_0)$. By standard interior elliptic estimates (see, e.g., \cite{GilbargTrudinger2001}*{Corollary 6.3}), there exists a constant $C_1 > 0$ depending on $n$, $\lambda_1$, $D_\Omega$, and $K$, such that
\[
\|\nabla u\|_{L^\infty(B_{r/2}(x_0))} \le \frac{C_1}{r} \|u\|_{L^\infty(B_r(x_0))}.
\]
For any point $x = x_0 + ry \in B_r(x_0)$, the triangle inequality yields $\delta(x) \le \delta(x_0) + r = 3r$. Hence, $u(x) \le C_0 \delta(x) \le 3C_0 r$. Therefore, we have
\[
 |\nabla u(x_0)| \le \frac{C_1}{r} \|u\|_{L^\infty(B_r(x_0))}\le 3 C_0 C_1.
\]
Since $3 C_0 C_1$ is independent of the choice of $x_0$, the gradient $\nabla u$ is uniformly bounded in $\Omega$, which completes the proof.
\end{proof}

\subsection{Improved logconcavity}\label{subsec:improved}

The key ingredient in the proof of Theorem~\ref{thm:global} is the following improved logconcavity inequality.  Let $\overline\varphi$ and $ \overline{\lambda}_b$ denote the first Dirichlet eigenfunction and eigenvalue to the problem \eqref{eq:evp} on the interval $(-D_\Omega/2,D_\Omega/2)$, respectively,
normalized so that $\max\overline\varphi=1$, and set
$\overline v:=-\log\overline\varphi$.

\begin{lemma}[{Improved logconcavity, \cites{AndrewsClutterbuck2011,SunWang2025}}]
\label{lem:improved-logconc}
Set $v:=-\log u$.  Then, for every pair of distinct points $z,y\in\Omega$,
\begin{equation}\label{eq:improved-logconc}
  \Bigl\langle
    \nabla v(z)-\nabla v(y),\;
    \frac{z-y}{|z-y|}
  \Bigr\rangle
  \ge
  2\,\overline v'\!\bracket{\frac{|z-y|}{2}}.
\end{equation}
\end{lemma}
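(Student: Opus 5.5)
This is the Andrews--Clutterbuck two-point modulus-of-concavity estimate, extended by Sun--Wang to the drift $-b\,x\cdot\nabla$. I would prove it by the two-point maximum principle applied to the log-gradient.

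\textbf{Setup.} Write $v=-\log u$. Then $v$ satisfies
\[
\Delta v-|\nabla v|^2-b\,x\cdot\nabla v=\lambda_b .
\]
The one-dimensional profile $\overline v=-\log\overline\varphi$ satisfies
\[
\overline v''-(\overline v')^2-bs\,\overline v'=\overline\lambda_b .
\]
Since $\overline\varphi$ is even, $\overline v'$ is odd, $\overline v'(s)\ge 0$ for $s\ge0$, and $\overline v'(s)\to+\infty$ as $s\to D_\Omega/2$.

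\textbf{Two-point function and boundary behaviour.} For $0<\varepsilon$ small, consider on $\{(z,y)\in\Omega\times\Omega:\ z\ne y\}$ the quantity
\[
Z_\varepsilon(z,y)=\Bigl\langle\nabla v(z)-\nabla v(y),\tfrac{z-y}{|z-y|}\Bigr\rangle-2\,\overline v_\varepsilon'\!\bigl(\tfrac{|z-y|}{2}\bigr).
\]
Here $\overline v_\varepsilon$ is the log of the first eigenfunction on the slightly enlarged interval $(-D_\Omega/2-\varepsilon,\,D_\Omega/2+\varepsilon)$, so that $\overline v_\varepsilon'$ is smooth up to $D_\Omega/2$. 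The aim is $Z_\varepsilon\ge0$; letting $\varepsilon\to0$ then gives \eqref{eq:improved-logconc}.

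The boundary behaviour is handled in two regimes.
\begin{itemize}
\item As $|z-y|\to0$, $Z_\varepsilon\to D^2v(z)[e,e]-\overline v_\varepsilon''(0)$. This needs a separate argument, and the standard trick is to first prove the bound with a slightly perturbed profile.
\item As $z$ or $y$ approaches $\partial\Omega$, convexity of $\Omega$ and the blow-up of $\nabla v$ in the inward normal direction force $Z_\varepsilon\to+\infty$. This follows from Hopf's lemma, i.e.\ $|\nabla u|>0$ at the boundary.
\end{itemize}

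\textbf{Interior minimum.} Suppose $Z_\varepsilon$ attains a negative minimum at an interior pair $(z_0,y_0)$. Put $e=(z_0-y_0)/|z_0-y_0|$ and $s=|z_0-y_0|/2$.
\begin{enumerate}
\item First-order conditions in $z$ and $y$ give $D^2v(z_0)e-D^2v(y_0)e$ in terms of $\overline v_\varepsilon''$.
\item Second-order conditions use variations $(z,y)\mapsto(z+te_i,\,y+te_i)$ for $e_i\perp e$, and $(z+te,\,y-te)$ along $e$. These give
\[
\sum_i\bigl(\partial_{e_i}^2\bigr)\text{-terms}\ \ge 0,
\qquad
\bigl\langle\nabla\Delta v(z_0)-\nabla\Delta v(y_0),e\bigr\rangle-4\,\overline v_\varepsilon'''(s)\ \ge 0\ \text{(schematically)}.
\]
\item Differentiate the PDE in the direction $e$:
\[
\partial_e\Delta v=2\,D^2v\,\nabla v\cdot e+b\,\partial_e v+b\,x\cdot D^2v\,e .
\]
Substitute this and the first-order conditions, and compare with the derivative of the one-dimensional ODE.
\end{enumerate}

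The OU drift contributes the terms $b\langle\nabla v(z)-\nabla v(y),e\rangle$ and $b\bigl(z\cdot D^2v(z)e-y\cdot D^2v(y)e\bigr)$. Using the first-order condition, the latter reduces to $b\,(z_0-y_0)\cdot e\,\overline v_\varepsilon''=2bs\,\overline v_\varepsilon''$. This exactly matches the term produced by $-bs\,\overline v'$ in the ODE, which is why the comparison profile has drift $bs$.

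\textbf{Closing the argument.} After substitution one obtains an inequality of the form
\[
0\le -c\,Z_\varepsilon\bigl(\text{positive factor}\bigr),
\]
which contradicts $Z_\varepsilon<0$. The quadratic term $|\nabla v|^2$ is handled by writing
\[
\nabla v(z_0)-\nabla v(y_0)=\bigl(Z_\varepsilon+2\overline v_\varepsilon'\bigr)e+\text{(tangential part)}.
\]
The tangential parts then cancel in the $|\nabla v|^2$ difference via the first-order conditions.

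\textbf{Main obstacle.} The delicate step is this algebraic cancellation: controlling the cross terms $2D^2v\nabla v\cdot e$ at the two points. A cleaner route would be to pass to the parabolic version, $v(\cdot,t)$ under the log-heat flow, and run the maximum principle in time with a comparison profile that is initially very concave. I would attempt the elliptic version first and fall back on the parabolic one, as in Andrews--Clutterbuck, if the diagonal limit causes trouble.
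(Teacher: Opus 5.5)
The paper gives no proof of this lemma. It quotes Andrews--Clutterbuck and Sun--Wang, who use a two-point maximum principle; Andrews--Clutterbuck run it along the heat flow on uniformly convex domains and reach general convex domains by approximation. Your elliptic version is correct in the interior, and easier than you expect. Write $w=\nabla v(z_0)-\nabla v(y_0)$ and let $w^\perp$ be its part orthogonal to $e$.
\begin{itemize}
\item The first-order conditions give $D^2v(z_0)e=D^2v(y_0)e=\overline v_\varepsilon''(s)\,e-\frac{1}{2s}w^\perp$.
\item The second-order conditions give $\partial_e\Delta v(z_0)-\partial_e\Delta v(y_0)\ge 2\overline v_\varepsilon'''(s)$; the coefficient is $2$, not $4$.
\item Inserting the differentiated PDE and the eigenvalue-free identity $\overline v_\varepsilon'''=2\overline v_\varepsilon'\overline v_\varepsilon''+b\overline v_\varepsilon'+bs\overline v_\varepsilon''$ gives
\[
0\le\bigl(2\overline v_\varepsilon''(s)+b\bigr)Z_\varepsilon(z_0,y_0)-\frac{|w^\perp|^2}{s}.
\]
Since $\overline v_\varepsilon''>0$, this contradicts $Z_\varepsilon<0$.
\end{itemize}
So the sign in your ``$0\le -cZ_\varepsilon$'' is reversed. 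The tangential parts do not cancel; they enter with a favourable sign, and the ``main obstacle'' you name is not one. The diagonal is also harmless. Since $\overline v_\varepsilon'(0)=0$, one has $Z_\varepsilon\to 0$ at interior diagonal points. The quantity $D^2v[e,e]-\overline v_\varepsilon''(0)$ you wrote is the limit of $Z_\varepsilon/|z-y|$, not of $Z_\varepsilon$. Hence a negative infimum cannot be approached on the diagonal, and no perturbed profile is needed.

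The genuine gap is the behaviour at $\partial\Omega$. Your claim that $Z_\varepsilon\to+\infty$ there by Hopf's lemma is false for general convex domains. Take $\Omega=(0,\pi)^2$ with $u=\sin x_1\sin x_2$.
\begin{itemize}
\item $\nabla u$ vanishes at the vertices, so Hopf fails.
\item For $z=(a,\delta)$, $y=(c,\delta)$ with $a>c$, one gets $Z_\varepsilon=\cot c-\cot a-2\overline v_\varepsilon'(\frac{a-c}{2})$ for every $\delta$. This stays bounded as $\delta\to 0$.
\end{itemize}
Even on smooth domains, when $z$ and $y$ tend to the same boundary point, the normal blow-ups of $\nabla v(z)$ and $\nabla v(y)$ cancel to leading order. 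So they do not control $Z_\varepsilon$.

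What is missing is the reduction used by Andrews--Clutterbuck. First take $\Omega$ smooth and uniformly convex.
\begin{itemize}
\item Hopf gives $|\nabla u|\ge c>0$ near $\partial\Omega$.
\item The identity $D^2u(\tau,\tau)=-|\nabla u|\,\mathrm{II}(\tau,\tau)$ for boundary-tangential $\tau$, together with uniform convexity, gives $D^2v\ge c'/u$ in every direction in a boundary layer.
\item Hence $Z_\varepsilon>0$ for distinct pairs in a small ball about a boundary point, because the segment stays in the layer.
\item Pairs converging to distinct limits, at least one on $\partial\Omega$, give $Z_\varepsilon\to+\infty$ by strict convexity.
\end{itemize}
Then approximate a general convex $\Omega$ by such domains, using convergence of $\lambda_b$, of $u$ in $C^2_{\mathrm{loc}}$, and of the diameters, before letting $\varepsilon\to 0$.
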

\begin{remark}
In the Laplacian case ($b=0$), one has $\overline
v'(s)=\dfrac{\pi}{D_\Omega}\tan\!\bracket{\dfrac{\pi}{D_\Omega}s}$, so
that $\overline v''(s)\ge\dfrac{\pi^2}{D_\Omega^2}=\overline\lambda_0$
and $\overline v'(s)\ge s\,\overline\lambda_0$ for
$s\in[0,D_\Omega/2)$.  In the Ornstein--Uhlenbeck case ($b=1$), the
function~$\overline\varphi$ is logconcave and even, so $\overline v$ is
convex with $\overline v'(0)=0$; moreover, from the ODE satisfied by
$\overline v$ one deduces $\overline v''\ge\overline\lambda_1$ on
$[0,D_\Omega/2)$, which yields
\begin{equation}\label{eq:Vprime-lower}
  \overline v'(s)\ge s\,\overline\lambda_1
  \qquad\forall\, s\in[0,D_\Omega/2).
\end{equation}
Therefore, in both cases, we have $\overline
v''(s)\ge\overline\lambda_b$ and $\overline v'(s)\ge
s\,\overline\lambda_b$ on $[0,D_\Omega/2)$, and
Lemma~\ref{lem:improved-logconc} yields the pointwise Hessian estimate
\begin{equation}\label{eq:Hess-lower}
    \nabla^2 v\ge \overline\lambda_b\,I_n>0
    \qquad\text{in }\ \Omega,
\end{equation}
which implies $u$ is strongly logconcave.
\end{remark}

\subsection{Convex envelope}\label{subsec:conv-env}

For a lower semicontinuous function $w:\Omega\to\R$ such that $w(x)\to+\infty$ as $\dist(x,\partial\Omega)\to 0$, its \emph{convex envelope} in~$\Omega$ is given by
\begin{equation}\label{eq:conv-env-def}
  w^{**}(x)
  :=
  \inf\Bigl\{
    \sum_{i=1}^m t_i\,w(x_i):\;
    m\in\N,\; t_i > 0,\; x_i\in\Omega,\;
    \sum_{i=1}^m t_i=1,\; \sum_{i=1}^m t_i x_i=x
  \Bigr\}.
\end{equation}
The points $\{x_i\}_{i=1}^m$ for which this infimum is attained,
i.e.\ such that $w^{**}(x)=\sum_{i=1}^m t_i\,w(x_i)$, are called the
\emph{support points} of $w^{**}$ at~$x$.

Recall that the \emph{second-order subjet} of a function~$w$ at~$x$ is
\begin{align*}
	\Sub w(x)
	:=
	\bigl\{
	(p,\mathcal{A})\in\R^n\!\times\!\R^{n\times n}_{\mathrm{sym}}:&
	\exists\,\varphi\in C^\infty(\R^n)\text{ with }
	w-\varphi\text{ having a local min at }x,\;\\
	&(p,\mathcal{A})=(\nabla\varphi(x),\nabla^2\varphi(x))
	\bigr\}.
\end{align*}

The following lemma is standard (see, e.g., \cites{AlvarezLasryLions1997,CrastaFragala2026} for detailed proofs).

\begin{lemma}\label{lem:convex_envelope}
Let $w\in C^2(\Omega)$ satisfy $w(x)\to+\infty$ as
$\dist(x,\partial\Omega)\to 0$. Then
$w^{**}(x)\to+\infty$ as $\dist(x,\partial\Omega)\to 0$. For each $x\in\Omega$, the infimum in \eqref{eq:conv-env-def} is attained with $m\le n+1$ and
$t_1,\dots,t_m>0$ such that 
\begin{equation}\label{eq:support_points}
	\sum_{i=1}^m t_i x_i=x, \quad w^{**}(x) = \sum_{i=1}^m t_i\,w(x_i).
\end{equation}
Moreover, if $(p,\mathcal{A})\in \Sub w^{**}(x)$, then:
\begin{enumerate}
\item[\emph{(i)}] $w^{**}(x_i) = w(x_i)$ for every $i$, and
\begin{equation}\label{eq:w_affine}
     w(x_i) = w^{**}(x) + p\cdot(x_i-x),\qquad i=1,\ldots,m.
\end{equation}
\item[\emph{(ii)}] $\nabla w(x_i)=p$ for every $i$.
\item[\emph{(iii)}] Each Hessian
  $\mathcal{A}_i:=\nabla^2 w(x_i)$ is positive semidefinite.
\item[\emph{(iv)}] If every $\mathcal{A}_i$ is positive definite, then
\begin{equation*}
    \mathcal{A} \le \bracket{\sum_{i=1}^m t_i\,(\mathcal{A}_i)^{-1}}^{-1}.
\end{equation*}
\item[\emph{(v)}] 
If every $\mathcal{A}_i$ is non-zero, then
\begin{equation}
    \tr(\mathcal{A})
   \le
   \bracket{\sum_{i=1}^m t_i\,\tr(\mathcal{A}_i)^{-1}}^{-1}.
\end{equation}
If $\mathcal{A}_i=\mathbf{0}$ for some $i$, then $\tr(\mathcal{A}) \le 0$.
\end{enumerate}
\end{lemma}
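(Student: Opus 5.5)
The proof follows the standard analysis of the convex envelope as in \cite{AlvarezLasryLions1997}. I would take the items in order.

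\emph{Existence of support points and blow-up of $w^{**}$.} Since $w\to+\infty$ at $\partial\Omega$ and $w$ is continuous, $w$ is bounded below, say by $m_0$, and coercive toward the boundary. For $x$ near $\partial\Omega$, any convex combination $\sum t_i x_i=x$ must place some weight on points near the boundary. More precisely, take a supporting half-space at the nearest boundary point. If $\sum t_i w(x_i)\le M$, then each point $x_i$ with weight $t_i$ lies in $\{w\le m_0+(M-m_0)/t_i\}$, which is a compact subset of $\Omega$. This forces $x$ to stay at a positive distance from $\partial\Omega$, which gives the blow-up of $w^{**}$. By Carath\'eodory's theorem, applied to the epigraph in $\R^{n+1}$, it suffices to take $m\le n+1$. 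A minimizing sequence has points $x_i^k$ whose weights are bounded below, and these lie in compact sets. Points with weights tending to $0$ can be dropped, using the lower bound on $w$ and the fact that $w$ is bounded on compacts for the remaining points. Passing to a limit then yields attainment with all $t_i>0$.

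\emph{Items (i)--(ii).} Let $(p,\mathcal A)\in \Sub w^{**}(x)$ with test function $\varphi$. Since $w^{**}$ is convex and touched from below by the $C^2$ function $\varphi$, $p$ is a subgradient, so $w^{**}(y)\ge w^{**}(x)+p\cdot(y-x)$ for all $y$. Then
\[
w^{**}(x)=\sum t_i w(x_i)\ge \sum t_i w^{**}(x_i)\ge \sum t_i\bigl(w^{**}(x)+p\cdot(x_i-x)\bigr)=w^{**}(x).
\]
Hence all these inequalities are equalities, and since each $t_i>0$, this gives (i). In particular $w(y)\ge w^{**}(y)\ge w(x_i)+p\cdot(y-x_i)$, so $x_i$ is an interior minimum of $w-p\cdot y$. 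This yields $\nabla w(x_i)=p$, which is (ii), and $\nabla^2w(x_i)\ge0$, which is (iii).

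\emph{Items (iv)--(v).} These are the main obstacle, and here I would use the perturbation trick. For small vectors $h_i$ satisfying $\sum t_i h_i=h$, the definition of $w^{**}$ gives
\[
\varphi(x+h)\le w^{**}(x+h)\le\sum t_i w(x_i+h_i).
\]
Expanding both sides to second order and using $\varphi(x)=w^{**}(x)=\sum t_iw(x_i)$, together with (i)--(ii) so that the first-order terms cancel, we get
\[
\langle\mathcal A h,h\rangle\le\sum t_i\langle \mathcal A_i h_i,h_i\rangle+o(|h|^2)
\]
for every choice of $h_i=O(|h|)$. Minimizing the quadratic form over the constraint with Lagrange multipliers gives $h_i=\mathcal A_i^{-1}\mu$ with $\mu=(\sum t_i\mathcal A_i^{-1})^{-1}h$, and the minimum value is $\langle(\sum t_i\mathcal A_i^{-1})^{-1}h,h\rangle$. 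This proves (iv).

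For (v), I would apply the same inequality with $h=s e_k$ for an orthonormal basis $e_1,\dots,e_n$. Rather than requiring invertibility, I would choose $h_i=c_i h$ with scalars $c_i$ satisfying $\sum t_i c_i=1$. Summing over $k$ gives $\tr\mathcal A\le\sum t_i c_i^2\tr\mathcal A_i$. Minimizing over the $c_i$ by Cauchy--Schwarz, taking $c_i\propto(\tr\mathcal A_i)^{-1}$, yields $\bigl(\sum t_i(\tr\mathcal A_i)^{-1}\bigr)^{-1}$ whenever every $\tr\mathcal A_i>0$. Here $\mathcal A_i\ne0$ and $\mathcal A_i\ge0$ together give $\tr\mathcal A_i>0$. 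If instead some $\mathcal A_{i_0}=0$, choose $c_{i_0}=1/t_{i_0}$ and all other $c_i=0$; then $\tr\mathcal A\le 0$.

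The delicate points are making the $o(|h|^2)$ remainders uniform, which follows from $w\in C^2$ near each $x_i$ and $\varphi\in C^2$, and dividing by $s^2$ before letting $s\to0$.
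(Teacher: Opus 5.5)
Your proposal is correct and is essentially the standard argument from the references the paper cites for this lemma (Alvarez--Lasry--Lions, Crasta--Fragal\`{a}). The paper gives no proof of its own, only the subsequent remark that $w^{**}$ is affine on the convex hull of the support points, which is exactly your equality chain for (i); your perturbation $x+h=\sum_i t_i(x_i+h_i)$ with $h_i=\mathcal{A}_i^{-1}\mu$, respectively $h_i=c_ih$, is the cited route to (iv) and (v), and the only step worth tightening is the blow-up of $w^{**}$, where you need a fixed fraction of the weight in a fixed compact set before applying the supporting half-space (get it by invoking $m\le n+1$ first, so some $t_i\ge 1/(n+1)$, or from the bound $\sum_{w(x_i)>R}t_i\le (M-m_0)/(R-m_0)$).
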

\begin{remark}
    Since $ w^{**} \leq w$ and $w^{**}$ is convex, equality in \eqref{eq:support_points} implies $w^{**}(x_i) = w(x_i)$ and that $w^{**}$ is affine on the convex hull of the support points. Therefore, every slope $p$ at $x_i$ is a slope of this affine restriction, which yields \eqref{eq:w_affine}.
\end{remark}

\section{Global \texorpdfstring{$\alpha$}{alpha}-logconcavity}\label{sec:main-proof}

This section is devoted to the proof of
Theorem~\ref{thm:global}. For fixed $\alpha \in (0,\frac{1}{2}]$ and $\kappa\in(0,1)$, set $v=-\log u$ and define
\begin{equation}\label{eq:vkappa-def}
  w_\kappa
  :=\bracket{-\log(\kappa u)}^{\alpha}
  =\bracket{-\log\kappa + v}^{\alpha}
  \qquad\text{in }\ \Omega.
\end{equation}
Since $0<u\le 1$ and $0<\kappa<1$, we have $\min_\Omega w_\kappa=(-\log\kappa)^{\alpha}>0$ and $w_\kappa(x)\to+\infty$ as $\dist(x,\partial\Omega)\to 0$.

\subsection{The transformed equation}\label{subsec:transformed}

Set $\beta = \alpha^{-1}\ge 2$. A direct computation starting from~\eqref{eq:evp} shows that $w_\kappa$ is a classical solution to
\begin{equation}\label{eq:w-pde}
  -\Delta w + b\,x\cdot\nabla w
  + \left(\beta w^{\beta-1}-\frac {\beta-1}{w}\right)|\nabla w|^2
        +\frac{\lambda_b}{\beta w^{\beta-1}}
  = 0
  \qquad\text{in }\ \Omega.
\end{equation}
It is convenient to express the left-hand side through the function
\begin{equation}\label{eq:N-def}
  \mathcal{N}_b[s,\xi,\mathcal{X};x]
  :=
  -\tr(\mathcal{X}) + b\,x\cdot\xi
  + \left(\beta s^{\beta-1}-\frac {\beta-1}{s}\right)|\xi|^2
        +\frac{\lambda_b}{\beta s^{\beta-1}},
\end{equation}
where $(s,\xi,\mathcal{X})\in(0,\infty)\times\R^n\times\R^{n\times
n}_{\mathrm{sym}}$ and $x\in\R^n$ denotes the spatial position (arising
from the drift term). The equation~\eqref{eq:w-pde} then takes the form
$\mathcal{N}_b[w_\kappa,\nabla w_\kappa,\nabla^2 w_\kappa\,;x]=0$ in $\Omega$.

Write $w_\kappa^{**}$ for the convex envelope of $w_\kappa$ in $\Omega$, and set
\begin{equation}\label{eq:ukappa-def}
  \hat u_\kappa
  :=
  \exp\!\bracket{-\!\log\kappa-(w_\kappa^{**})^{\beta}},
\end{equation}
so that 
\begin{equation*}
  w_\kappa^{**}
  = \bracket{-\log(\kappa\,\hat u_\kappa)}^{\alpha}.
\end{equation*}

The proof  of Theorem~\ref{thm:global} proceeds through three propositions.

\subsection{Gradient bound at support points}
\label{subsec:grad-bound}

Our first technical step is a lower bound on the gradient of
$w_\kappa$ at the support points of its convex envelope.  This bound
will play a decisive role when verifying that $w_\kappa^{**}$ is a
viscosity supersolution of~\eqref{eq:w-pde}. The estimate of the following proposition is new for $\beta>2$.
\begin{proposition}\label{prop:grad-est}
Let $\kappa\in(0,1)$.  Suppose $w_\kappa^{**}(x)<w_\kappa(x)$ at some $x\in\Omega$, and let $x_1,\dots,x_m\in\Omega$ ($m\ge 2$) be support points for $w_\kappa^{**}(x)$.  Write $p_\kappa$ for the common gradient $\nabla w_\kappa(x_i)$ (cf.\ Lemma~\ref{lem:convex_envelope}\emph{(ii)}).  If we choose $M_0:=w_\kappa(x_1)\le w_\kappa(x_2)\le\cdots\le w_\kappa(x_m)=:M_1$ and $\tau = M_1/M_0> 1$, then
\begin{equation}\label{eq:p-lower}
  |p_\kappa|^2 \ge \max\left\{\frac{\overline\lambda_b}{\beta(\beta-1)(\tau M_0)^{\beta-2}},\frac{(\tau-1)^2M_0^2}{D_\Omega^2}\right\}.
\end{equation}
\end{proposition}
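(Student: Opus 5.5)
The plan is to derive both lower bounds in \eqref{eq:p-lower} from two facts about the extreme support points $x_1$ and $x_m$. The first fact is the affine relation \eqref{eq:w_affine} together with the common gradient from Lemma~\ref{lem:convex_envelope}(ii). The second is the improved logconcavity inequality of Lemma~\ref{lem:improved-logconc}, in the form $2\overline v'(s/2)\ge s\,\overline\lambda_b$ from the Remark. Since $\tau>1$, we have $M_1>M_0$, so $x_m\neq x_1$. Set $d:=|x_m-x_1|\in(0,D_\Omega]$ and $e:=(x_m-x_1)/d$.

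\textbf{Step 1 (second bound).} Subtracting the two instances of \eqref{eq:w_affine} at $i=1$ and $i=m$ gives
\[
p_\kappa\cdot(x_m-x_1)=w_\kappa(x_m)-w_\kappa(x_1)=(\tau-1)M_0>0 .
\]
By Cauchy--Schwarz, $(\tau-1)M_0\le |p_\kappa|\,d\le |p_\kappa|\,D_\Omega$. This yields $|p_\kappa|^2\ge (\tau-1)^2M_0^2/D_\Omega^2$. I also keep the sharper intermediate inequality $|p_\kappa|^2\ge (M_1-M_0)^2/d^2$ for Step 3.

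\textbf{Step 2 (translate to $v$).} From $w_\kappa=(-\log\kappa+v)^{\alpha}$ we get $\nabla w_\kappa=\alpha\,w_\kappa^{1-\beta}\nabla v$. Hence at each support point $\nabla v(x_i)=\beta\,w_\kappa(x_i)^{\beta-1}p_\kappa$. In particular $\nabla v(x_m)-\nabla v(x_1)=\beta\,(M_1^{\beta-1}-M_0^{\beta-1})\,p_\kappa$. Pairing this with $e$ and using Step 1 gives
\[
\beta\,(M_1^{\beta-1}-M_0^{\beta-1})\,\frac{M_1-M_0}{d}.
\]
Lemma~\ref{lem:improved-logconc} with $z=x_m$, $y=x_1$, combined with $\overline v'(s)\ge s\overline\lambda_b$, bounds this quantity below by $2\overline v'(d/2)\ge \overline\lambda_b d$. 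Therefore
\[
d^2\le \frac{\beta\,(M_1^{\beta-1}-M_0^{\beta-1})(M_1-M_0)}{\overline\lambda_b}.
\]

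\textbf{Step 3 (first bound).} Inserting this upper bound on $d^2$ into $|p_\kappa|^2\ge (M_1-M_0)^2/d^2$ gives
\[
|p_\kappa|^2\ge \frac{\overline\lambda_b\,(M_1-M_0)}{\beta\,(M_1^{\beta-1}-M_0^{\beta-1})}.
\]
Since $\beta\ge2$, the map $s\mapsto s^{\beta-1}$ has nondecreasing derivative $(\beta-1)s^{\beta-2}$. The mean value theorem therefore gives $M_1^{\beta-1}-M_0^{\beta-1}\le(\beta-1)M_1^{\beta-2}(M_1-M_0)$, with $M_1=\tau M_0$. This produces $|p_\kappa|^2\ge \overline\lambda_b/\bigl(\beta(\beta-1)(\tau M_0)^{\beta-2}\bigr)$, and taking the maximum of the two bounds completes the argument.

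The only delicate point is the correct use of signs in Step 2. Positivity of $p_\kappa\cdot e$, which comes from $M_1>M_0$, is what allows the improved logconcavity, a lower bound on a directional difference, to turn into an upper bound on $d$. This is also the reason for choosing the extreme pair $x_1,x_m$ rather than an arbitrary pair of support points.
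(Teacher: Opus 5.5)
Your proposal is correct and follows essentially the same route as the paper. Both arguments use the affine relation $p_\kappa\cdot(x_m-x_1)=M_1-M_0$, the identity $\nabla v=\beta w_\kappa^{\beta-1}p_\kappa$ inserted into the improved logconcavity estimate with $2\overline v'(d/2)\ge\overline\lambda_b d$, the convexity bound $M_1^{\beta-1}-M_0^{\beta-1}\le(\beta-1)M_1^{\beta-2}(M_1-M_0)$, and Cauchy--Schwarz; the only difference is the cosmetic order in which you eliminate $|x_m-x_1|$ and apply the convexity bound.
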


\begin{proof}
Setting $v=-\log u$, the chain rule yields
$\nabla v=\beta\,w_\kappa^{\beta-1}\nabla w_\kappa$. Since $w_\kappa^{**}$ is affine on the segment joining any two support points $x_i,x_j$, we obtain
\begin{equation*}
	w_\kappa(x_i)-w_\kappa(x_j)=\langle p_\kappa,\,x_i-x_j\rangle.
\end{equation*}
By substituting $\nabla v(x_k)=\beta\,w_\kappa^{\beta-1}(x_k)\,p_\kappa$ into the improved log-concavity estimate~\eqref{eq:improved-logconc} and applying it to the pair $(x_1,x_m)$, we obtain
\begin{align}
    \frac{\beta(M_1^{\beta-1}-M_0^{\beta-1})(M_1-M_0)}{|x_m-x_1|}
  &=\beta(w_\kappa^{\beta-1}(x_m)-w_\kappa^{\beta-1}(x_1))\Bigl\langle p_\kappa,\,\frac{x_m-x_1}{|x_m-x_1|}\Bigr\rangle\notag\\
  &\ge
  \overline{\lambda}_b |x_m-x_1|.\label{eq:M_1M_0_lower}
\end{align}
Since $\beta \in [2,\infty)$ and $t^{\beta-1}$ is a convex function in $(0,+\infty)$, we have
\[
M_1^{\beta-1}-M_0^{\beta-1} \leq (\beta - 1)M_1^{\beta-2}(M_1-M_0),
\]
which, combined with~\eqref{eq:M_1M_0_lower}, yields
\[
\beta( \beta-1)M_1^{\beta-2}\bracket{\frac{M_1-M_0}{|x_m-x_1|}}^2\ge \overline{\lambda}_b.
\]
Using the elementary inequality
\begin{equation}\label{eq:M_1M_0_upper}
    M_1-M_0=\langle p_\kappa,\,x_m-x_1\rangle\leq \av{p_\kappa}|x_m-x_1|,
\end{equation}
we deduce
\[
\beta(\beta-1)M_1^{\beta-2}\av{p_\kappa}^2\ge \overline{\lambda}_b,
\]
which is the first lower bound in~\eqref{eq:p-lower}. Finally, note that inequality \eqref{eq:M_1M_0_upper} also implies
\[
M_1-M_0\leq \av{p_\kappa}|x_m-x_1|\le D_\Omega\av{p_\kappa},
\]
from which the second lower bound in~\eqref{eq:p-lower} follows immediately.
\end{proof}

\begin{remark}
For $1<\tau\le 2$, the first lower bound in~\eqref{eq:p-lower} reads
 \[
    |p_\kappa|^2 \ge \frac{\overline\lambda_b}{\beta(\beta-1)2^{\beta-2}M_0^{\beta-2}}.
    \]
    For $\tau>2$, if we choose 
    \begin{equation}\label{eq:kappa_1_alpha}
        \kappa\le \exp\bracket{-\frac{\overline\lambda_b D_\Omega^2}{2^{\beta-2}\beta(\beta-1)}}=:\kappa_{\alpha,1},
\end{equation}
then
\[
    M_0\ge (-\log\kappa)^{\alpha}\ge \bracket{\frac{\overline\lambda_b D_\Omega^2}{2^{\beta-2}\beta(\beta-1)}}^{1/\beta},
\]
and the second lower bound \eqref{eq:p-lower} gives
\[
    |p_\kappa|^2 \ge\frac{M_0^2}{D_\Omega^2}\ge \frac{\overline\lambda_b}{\beta(\beta-1)2^{\beta-2}M_0^{\beta-2}}.
\]
    Consequently, for $\kappa\le\kappa_{\alpha,1}$,
    the bound~\eqref{eq:p-lower} simplifies to
    \begin{equation}\label{eq:p-lower-useful}
        |p_\kappa|^2 \ge \frac{\overline\lambda_b}{\beta(\beta-1)2^{\beta-2}M_0^{\beta-2}},
    \end{equation}
    which is more convenient for our proof.
\end{remark}

\subsection{Supersolution property of the convex envelope}
\label{subsec:supersol}

The key argument of the proof of Theorem~\ref{thm:global} consists in verifying that $w_\kappa^{**}$
satisfies $\mathcal{N}_b\ge 0$ in the viscosity sense when $\kappa\le\kappa_\alpha(\Omega)$.

\begin{proposition}\label{prop:supersol}
For every $\kappa\in(0,\kappa_\alpha(\Omega)]$, the convex envelope $w_\kappa^{**}$ is a viscosity supersolution of
equation~\eqref{eq:w-pde}in the sense that, for any fixed $x\in\Omega$, if $(p,\mathcal{A})\in\Sub
w_\kappa^{**}(x)$, then 
\begin{equation}\label{eq:supersol-ineq}
  \mathcal{N}_b[w_\kappa^{**}(x),\,p,\,\mathcal{A}\,;x]
  \ge 0.
\end{equation}
\end{proposition}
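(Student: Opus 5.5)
We argue at a fixed $x\in\Omega$ with $(p,\mathcal{A})\in\Sub w_\kappa^{**}(x)$, and split into two cases according to whether $x$ is a contact point.

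\emph{Case 1: $w_\kappa^{**}(x)=w_\kappa(x)$.} Any smooth $\varphi$ touching $w_\kappa^{**}$ from below at $x$ also touches $w_\kappa\ge w_\kappa^{**}$ from below at $x$. Hence $p=\nabla w_\kappa(x)$ and $\mathcal{A}\le\nabla^2 w_\kappa(x)$. Since $\mathcal{N}_b$ is nonincreasing in $\mathcal{X}$, \eqref{eq:supersol-ineq} follows from the classical equation \eqref{eq:w-pde}.

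\emph{Case 2: $w_\kappa^{**}(x)<w_\kappa(x)$.} Take support points $x_1,\dots,x_m$ ($m\ge2$) with weights $t_i$, and set $w_i:=w_\kappa(x_i)$ and $s:=w_\kappa^{**}(x)=\sum t_iw_i$. By Lemma~\ref{lem:convex_envelope}(ii), $\nabla w_\kappa(x_i)=p$, so the equation at $x_i$ gives
\[
\tr\mathcal{A}_i=b\,x_i\cdot p+g(w_i)|p|^2+\frac{\lambda_b}{\beta w_i^{\beta-1}},\qquad g(w):=\beta w^{\beta-1}-\frac{\beta-1}{w}.
\]
The key observation is that the drift term depends on $x_i$ only through $w_i$. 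Indeed, \eqref{eq:w_affine} gives $p\cdot(x_i-x)=w_i-s$, hence $b\,x_i\cdot p=b\,x\cdot p-bs+bw_i$. Therefore
\[
\tr\mathcal{A}_i=c+F(w_i),\qquad c:=b\,x\cdot p-bs,\qquad F(w):=bw+g(w)|p|^2+\frac{\lambda_b}{\beta w^{\beta-1}},
\]
and a direct check shows $c+F(s)=\mathcal{N}_b[s,p,0;x]+\tr(0)$, i.e.\ $\mathcal{N}_b[s,p,\mathcal{A};x]=c+F(s)-\tr\mathcal{A}$.

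By Lemma~\ref{lem:convex_envelope}(v), it then suffices to prove two things, with $\kappa\le\kappa_\alpha(\Omega)$:
\begin{enumerate}
\item[(a)] $c+F(w)>0$ for all $w$ in the range $[M_0,M_1]$ of the $w_i$. This excludes $\mathcal{A}_i=\mathbf{0}$ and makes the harmonic mean meaningful.
\item[(b)] $w\mapsto 1/(c+F(w))$ is convex on $[M_0,M_1]$.
\end{enumerate}
Given (a) and (b), Jensen's inequality yields $\sum t_i/(c+F(w_i))\ge 1/(c+F(s))$, hence $\tr\mathcal{A}\le c+F(s)$, which is exactly \eqref{eq:supersol-ineq}.

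Convexity of $1/(c+F)$ is equivalent to
\[
2F'(w)^2\ge (c+F(w))\,F''(w).
\]
All the inputs needed for (a) and (b) come from the gradient bound. By Proposition~\ref{prop:grad-est} and its Remark, for $\kappa\le\kappa_{\alpha,1}$ we have $|p|^2\ge \overline\lambda_b/(\beta(\beta-1)2^{\beta-2}M_0^{\beta-2})$, and we may assume $\tau\le2$ in the relevant regime, so that $w\le 2M_0$ on $[M_0,M_1]$. The term $\beta w^{\beta-1}|p|^2$ should dominate everything else:
\begin{enumerate}
\item[(i)] The term $(\beta-1)|p|^2/w$ is absorbed once $w^\beta\ge(-\log\kappa)$ is large; this is the first entry of \eqref{eq:threshold}, with the $2^\beta/(2^\beta-1)$ factor coming from comparing $w$ and $2w$.
\item[(ii)] The term $\lambda_b/(\beta w^{\beta-1})$ is absorbed via the third entry, using the $|p|^2$ lower bound.
\item[(iii)] The terms $c$ and $bw$ are absorbed via Cauchy--Schwarz, $|x\cdot p|\le\max_\Omega|x|\,|p|$, followed by Young's inequality against $w^{\beta-1}|p|^2$. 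This produces the fourth entry involving $b\max_\Omega|x|^2/\overline\lambda_b$.
\end{enumerate}
With these absorptions, $c+F\approx\beta w^{\beta-1}|p|^2(1+\varepsilon)$, $F'\approx\beta(\beta-1)w^{\beta-2}|p|^2$ and $F''\approx\beta(\beta-1)(\beta-2)w^{\beta-3}|p|^2$. The convexity inequality then reduces to $2(\beta-1)\ge(\beta-2)(1+\varepsilon)$ up to small errors, which holds with room to spare for $\beta\ge2$. This matches the model fact that $w^{-(\beta-1)}$ is convex.

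The main obstacle is bookkeeping the error terms in (b) uniformly on $[M_0,M_1]$. In particular, the signs of the lower-order pieces of $F'$ and $F''$ (from $-(\beta-1)/w$, $\lambda_b w^{1-\beta}$ and the constant $b$) must be tracked, and the constant $c$ must not spoil the positivity of $c+F$. I would first try to bound each lower-order piece by a small multiple of the corresponding leading term using the four thresholds in \eqref{eq:threshold}. If that is too lossy, the fallback is to use only $\tr\mathcal{A}\le\sum t_i\tr\mathcal{A}_i$ on the drift-free part and keep the harmonic-mean gain only for the leading power term.
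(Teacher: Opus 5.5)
Your proposal follows essentially the paper's proof. Your $c+F(w)$ is exactly the paper's $G(s)=b(s+\eta)+\bigl(\beta s^{\beta-1}-\frac{\beta-1}{s}\bigr)|p|^2+\frac{\lambda_b}{\beta s^{\beta-1}}$ with $\eta=x\cdot p-w^*$. The paper likewise gets $\tr\mathcal{A}_i=G(w_i)$ and proves $G>0$ and convexity of $1/G$ on $[M_0,M_1]$ from \eqref{eq:p-lower-useful} and the four thresholds. It bounds the drift by $b|s+\eta|\le b\max_\Omega|x|\,|p|$, because $s+\eta$ is affine with endpoint values $x_1\cdot p$ and $x_m\cdot p$, and concludes via Lemma~\ref{lem:convex_envelope}(v) and Jensen.

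Two small corrections. You cannot assume $\tau\le 2$, and you do not need to: the Remark after Proposition~\ref{prop:grad-est} gives \eqref{eq:p-lower-useful} for every $\tau$ once $\kappa\le\kappa_{\alpha,1}$, and every absorption uses only $w\ge M_0$. Also, your observation that $\tr\mathcal{A}_i=G(w_i)>0$ excludes $\mathcal{A}_i=\mathbf{0}$ shows that the paper's separate case of a vanishing $\mathcal{A}_i$ is vacuous.
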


\begin{proof}
If $w_\kappa^{**}(x)=w_\kappa(x)$, then $\Sub w_\kappa^{**}(x)\subseteq\Sub w_\kappa(x)$, and
\eqref{eq:supersol-ineq} follows immediately from the fact that $w_\kappa$ solves
\eqref{eq:w-pde} classically. In particular, this case covers the
situation $p=0$, since the unique minimum of $w_\kappa$ coincides with
the unique maximum of~$u$, where $w_\kappa^{**}=w_\kappa$.

Therefore, it remains to consider the case where $w_\kappa^{**}(x)<w_\kappa(x)$ and $p\ne 0$. By Lemma \ref{lem:convex_envelope}, there exist  $t_1,\dots,t_m \in (0,1)$ and $x_1,\dots,x_m\in\Omega$ ($m\ge 2$) such that
\[
\mathop{\sum}_{i=1}^{m} t_{i}=1,\quad x=\mathop{\sum}_{i=1}^{m} t_i x_i,\quad w^{**}_k (x)=\mathop{\sum}_{i=1}^{m} t_i w_\kappa(x_i).
\]
Abbreviate
\[
  w^*:=w_\kappa^{**}(x),
  \qquad
  w_i:=w_\kappa(x_i),\qquad p:=\nabla w_\kappa(x_i),
  \qquad
  \mathcal{A}_i:=\nabla^2 w_\kappa(x_i).
\]
Without loss of generality, assume that $M_0 = w_1\le w_2\le\cdots\le w_m = M_1$ with $\tau = M_1/M_0>1$.

Since $w^{**}$ is affine on the convex hull of the support points, the identity \eqref{eq:w_affine} yields 
\begin{equation}\label{000}
	x_i\cdot p = w_i + (x\cdot p-w^{*})=w_i+\eta
\end{equation}
for some constant $\eta$ independent of $i$.  For any $\beta \ge 2$, we define
\[
G(s) := b(s+\eta)
  + \left(\beta s^{\beta-1}-\frac {\beta-1}{s}\right)|p|^2
        +\frac{\lambda_b}{\beta s^{\beta-1}}.
\]
Since $w_\kappa$ satisfies~\eqref{eq:w-pde} at $x_i$ classically, we have
\begin{align*}
    \mathcal{N}_b[w_i,p,\mathcal{A}_i;x_i]
    & = -\tr(\mathcal{A}_i) + b\,x_i\cdot p
  + \left(\beta w_i^{\beta-1}-\frac{\beta-1}{w_i}\right)|p|^2
        +\frac{\lambda_b}{\beta w_i^{\beta-1}}\\
        &=-\tr(\mathcal{A}_i) + b(w_i+\eta)
  + \left(\beta w_i^{\beta-1}-\frac{\beta-1}{w_i}\right)|p|^2
        +\frac{\lambda_b}{\beta w_i^{\beta-1}}\\
         &=-\tr(\mathcal{A}_i) + G(w_i)\\
         &=0,
\end{align*}
which implies
\begin{equation}\label{eq:trace_A_i_G}
    \tr(\mathcal{A}_i) = G(w_i).
\end{equation}
 For $M_0\le s\le M_1$, by identity \eqref{000}, we have
\[
\av{s+\eta}\le \max\{\av{M_0+\eta}, \av{M_1+\eta}\}\le \max_{i}\av{x_i}\av{p}\le \frac{\beta}{2^{\beta+1}} s^{\beta-1}|p|^2 + \frac{2^{\beta-1}\max_{\Omega}\av{x}^2}{\beta s^{\beta-1}},
\]
which yields 
\begin{equation}\label{eq:lower_G_s}
    G(s)> \frac{(2^{\beta+1}-1)\beta}{2^{\beta+1}}s^{\beta-1}|p|^2-\frac{2^{\beta-1}b\max_{\Omega}\av{x}^2}{\beta s^{\beta-1}}-\frac {\beta-1}{s}|p|^2.
\end{equation}
Choose
\begin{equation}\label{eq:kappa_2_alpha}
    \kappa_{\alpha,2}:=\min\left\{\exp\bracket{-\frac{2^{\beta}(\beta-1)}{(2^{\beta}-1)\beta}}, \exp\bracket{-\frac{(\beta-1)2^{3\beta-2}b\max_{\Omega}\av{x}^2}{\beta\overline{\lambda}_b}}\right\}.
\end{equation}
Then from the gradient bound \eqref{eq:p-lower-useful}, we have 
\begin{equation}\label{eq:kappa_2_G}
    \frac{\beta}{2^{\beta+1}}s^{\beta-1}|p|^2\ge\frac{2^{\beta-1}b\max_{\Omega}\av{x}^2}{\beta s^{\beta-1}},
    \qquad
\frac{(2^{\beta}-1)\beta}{2^{\beta}}s^{\beta-1}|p|^2\ge \frac {\beta-1}{s}|p|^2
\end{equation}
for $\kappa\le\kappa_{\alpha,2}$ and $s\ge M_0\ge (-\log\kappa)^\alpha$. Hence,
\[
G(s)> 0
\]
for every $s\ge M_0$.

By Lemma \ref{lem:convex_envelope}, we know that all the matrices $\mathcal{A}_i$ are positive semi-definite. To establish \eqref{eq:supersol-ineq}, we distinguish two cases.

\noindent\textsc{Case 1: } every $\mathcal{A}_i$ is non-zero.
By Lemma~\ref{lem:convex_envelope} {(v)} and the
identity~\eqref{eq:trace_A_i_G},
\begin{align*}\label{eq:F-lower1}
  \mathcal{N}_b[w^*,p,\mathcal{A};x]
  &= -\tr(\mathcal{A}) + b\,x\cdot p
  + \left(\beta (w^{*})^{\beta-1}-\frac{\beta-1}{w^{*}}\right)|p|^2
        +\frac{\lambda_b}{\beta (w^{*})^{\beta-1}}\\
    &=-\tr(\mathcal{A}) +G(w^{*})\ge
  -\bracket{\sum_{i=1}^m t_i
    G(w_i)^{-1}}^{\!-1}
  + G(w^*),
 \end{align*}
It therefore suffices to show
\begin{equation}\label{eq:target}
  G(w^*)^{-1} \le \sum_{i=1}^m t_i
    G(w_i)^{-1},
\end{equation}
from which~\eqref{eq:supersol-ineq} follows at once. By Jensen's
inequality, \eqref{eq:target} holds whenever $G(s)^{-1}$ is convex on $[M_0,M_1]$. 

Differentiating $G(s)$ twice yields
\begin{align*}
    G'(s) &= b
  + \left(\beta(\beta-1) s^{\beta-2}+\frac {\beta-1}{s^2}\right)|p|^2
        -\frac{(\beta-1)\lambda_b}{\beta s^{\beta}},\\
        G''(s)&=\left(\beta(\beta-1)(\beta-2) s^{\beta-3}-\frac {2(\beta-1)}{s^3}\right)|p|^2
        +\frac{(\beta-1)\lambda_b}{s^{\beta+1}}.
\end{align*}
Since 
\[
\bracket{G(s)^{-1}}'' = \frac{2G'(s)^2-G''(s)G(s)}{G(s)^3},
\]
for convexity of $G^{-1}$, it suffices to show
\begin{align}\label{eq:target2}
    2G'(s)^2-G''(s)G(s)\ge 0.
\end{align}
Choose 
\begin{equation}\label{eq:kappa_alpha_3}
    \kappa_{\alpha,3}:=\exp\bracket{-\frac{(\beta-1)2^{2\beta-1}\lambda_b}{\beta\,\overline{\lambda}_b}}.
\end{equation}
Then from the gradient bound \eqref{eq:p-lower-useful}, we have 
\[
2^{-\beta-1}\beta(\beta-1) s^{\beta-2}|p|^2\ge\frac{(\beta-1)\lambda_b}{\beta s^{\beta}},
\qquad
2^{-\beta-1}\beta s^{\beta-1}|p|^2\ge\frac{\lambda_b}{\beta s^{\beta-1}}
\]
for $\kappa\le\kappa_{\alpha,3}$ and $s\ge M_0\ge (-\log\kappa)^\alpha$. Combining the above inequalities with the inequality \eqref{eq:kappa_2_G}, we obtain
\begin{align}
    G(s) &\le 
    \frac{(2^{\beta+1}+1)\beta}{2^{\beta+1}}s^{\beta-1}|p|^2
    +\frac{2^{\beta-1}\max_{\Omega}b\av{x}^2}{\beta s^{\beta-1}}
    +\frac{\lambda_b}{\beta s^{\beta-1}}\notag\\
    &\le \frac{(2^{\beta+1}+2)\beta}{2^{\beta+1}}s^{\beta-1}|p|^2
    +\frac{2^{\beta-1}\max_{\Omega}b\av{x}^2}{\beta s^{\beta-1}}\notag\\
&\le \frac{(2^{\beta-1}+1)\beta}{2^{\beta-1}}s^{\beta-1}|p|^2,
    \notag
\end{align}
and
\begin{align}
    G'(s) &\ge 
    b
    +\beta(\beta-1) s^{\beta-2}|p|^2
        -\frac{(\beta-1)\lambda_b}{\beta s^{\beta}}\notag\\
    &\ge b+\frac{(2^{\beta+1}-1)}{2^{\beta+1}}\beta(\beta-1)s^{\beta-2}|p|^2.\notag
\end{align}
Similarly, we have
\begin{align}
    G''(s) &\le 
    \beta(\beta-1)(\beta-2) s^{\beta-3}|p|^2
        +\frac{(\beta-1)\lambda_b}{s^{\beta+1}}\notag\\
    &\le\beta(\beta-1)\bracket{\beta+2^{-\beta-1}\beta-2} s^{\beta-3}|p|^2
    \notag\\
    &\le\beta(\beta-1)^2 s^{\beta-3}|p|^2.
    \notag
\end{align}
For $G''\le 0$, it is easy to see that \eqref{eq:target2} holds. So we may assume $G''(s)>0$.
Therefore, 
\begin{align*}
    2G'(s)^2-G''(s)G(s)
&\ge 2\bracket{b+\frac{(2^{\beta+1}-1)}{2^{\beta+1}}\beta(\beta-1)s^{\beta-2}|p|^2}^2  - \frac{(2^{\beta-1}+1)}{2^{\beta-1}}\beta^2(\beta-1)^2s^{2\beta-4}|p|^4\\
&\ge  \bracket{\frac{(2^{\beta+1}-1)^2}{2^{2\beta+1}}-\frac{(2^{\beta-1}+1)}{2^{\beta-1}}}\beta^2(\beta-1)^2s^{2\beta-4}|p|^4\\
&\ge \frac{1}{2^{2\beta+1}}\beta^2(\beta-1)^2s^{2\beta-4}|p|^4.
\end{align*}
Hence, \eqref{eq:target2} holds, i.e., $G^{-1}(s)$ is convex on $[M_0,M_1]$ for $\kappa\le \kappa_\alpha(\Omega)$ with
\begin{align*}
    \kappa_\alpha(\Omega) := \min\left\{\kappa_{\alpha,1},\kappa_{\alpha,2},\kappa_{\alpha,3}\right\}.
\end{align*}
Together with \eqref{eq:kappa_1_alpha},  \eqref{eq:kappa_2_alpha}, and  \eqref{eq:kappa_alpha_3}, this yields the definition \eqref{eq:threshold} for $\kappa_\alpha(\Omega)$.
Thus, whenever $\kappa\le \kappa_\alpha(\Omega)$, $\mathcal{N}_b[w_\kappa^{**}(x),\,p,\,\mathcal{A}\,;x]
  \ge 0$ for any $x\in\Omega$ and $(p,\mathcal{A})\in\Sub
w_\kappa^{**}(x)$.


\medskip
\noindent\textsc{Case 2: }some $\mathcal{A}_i=\mathbf{0}$.\;
By Lemma~\ref{lem:convex_envelope} {(v)}, $\tr(\mathcal{A})\le 0$.
Since $w^*\ge M_0$, it follows that
\[
  \mathcal{N}_b[w_\kappa^{**},p,\mathcal{A};x]
  \ge
  G(w^*)>0
\]
for any $x\in\Omega$ and $(p,\mathcal{A})\in\Sub
w_\kappa^{**}(x)$.

Therefore, for every $\kappa\in(0,\kappa_\alpha(\Omega)]$, the convex envelope $w_\kappa^{**}$ is a viscosity supersolution of
equation~\eqref{eq:w-pde}.
\end{proof}

\subsection{Completion of the proof}\label{sec:conclusion}

We now translate the supersolution property of $w_\kappa^{**}$
back to a subsolution property of the corresponding candidate
eigenfunction $\hat u_\kappa$, that is,
\[
\hat u_{\kappa}=\exp \left(-\log \kappa-(w_\kappa^{**})^{1/\alpha})\right)\quad \text{i.e.,}\quad w_\kappa ^{**}=(-\log (\kappa\hat  u_\kappa))^{\alpha},\quad \forall \alpha\in \left(0, 1/2\right].
\]

Following the approach in~\cite{CrastaFragala2026}, we establish the following proposition, which completes the proof of Theorem~\ref{thm:global}.
\begin{proposition}\label{prop:subsol}
Let $\kappa\in(0,\kappa_\alpha(\Omega)]$ and let $\hat u_\kappa$ be
defined by~\eqref{eq:ukappa-def}.  Then:
\begin{enumerate}
\item[\emph{(i)}]
  $\hat u_\kappa$ is a viscosity and distributional subsolution to
  $-\OU_b\hat u_\kappa=\lambda_b\,\hat u_\kappa$ in~$\Omega$;
\item[\emph{(ii)}]
  $-\OU_b\hat u_\kappa\le\lambda_b\,\hat u_\kappa$ holds a.e.\ in
  $\Omega$;
  \item[\emph{(iii)}]
  $\max_{\overline\Omega}\hat u_\kappa=1$,
  $\hat u_\kappa\in W^{1,2}_{0}(\Omega)$ for $b=0$ and  $\hat u_\kappa\in W^{1,2}_{0}(\Omega,\gamma)$ for $b=1$.
\end{enumerate}
\end{proposition}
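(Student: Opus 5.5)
The plan is to transfer the supersolution property of $w_\kappa^{**}$ from Proposition~\ref{prop:supersol} to $\hat u_\kappa$ through the monotone change of variables
\[
\hat u_\kappa=\kappa^{-1}\exp\bigl(-(w_\kappa^{**})^{\beta}\bigr).
\]
Set $\Phi(s):=\kappa^{-1}e^{-s^\beta}$, which is smooth and strictly decreasing on $(0,\infty)$.

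\textbf{Step (i), viscosity subsolution.} Take a smooth $\psi$ touching $\hat u_\kappa$ from above at $x$. Because $\Phi$ is decreasing, the function $\varphi:=\Phi^{-1}\circ\psi=(-\log(\kappa\psi))^{\alpha}$ touches $w_\kappa^{**}$ from below at $x$. This needs $\kappa\psi(x)=\kappa\hat u_\kappa(x)\le 1$, which holds since $w_\kappa^{**}\ge(-\log\kappa)^\alpha>0$. Hence $(\nabla\varphi(x),\nabla^2\varphi(x))\in\Sub w_\kappa^{**}(x)$, and Proposition~\ref{prop:supersol} gives $\mathcal N_b[\varphi,\nabla\varphi,\nabla^2\varphi;x]\ge0$. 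The computation that produced \eqref{eq:w-pde} from \eqref{eq:evp} is reversible pointwise and multiplies by a positive factor. Indeed, $\mathcal N_b[\varphi,\dots]=(\beta\varphi^{\beta-1}\psi)^{-1}\bigl(\OU_b\psi+\lambda_b\psi\bigr)$ up to the sign convention. Therefore $-\OU_b\psi(x)\le\lambda_b\psi(x)$, which is the viscosity subsolution inequality. I will check this factor and sign by repeating the derivation of \eqref{eq:w-pde} with $\psi$ in place of $u$.

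\textbf{Step (ii), a.e. inequality.} Since $w_\kappa^{**}$ is convex, it is locally Lipschitz and, by Alexandrov's theorem, twice differentiable a.e. The same holds for $\hat u_\kappa=\Phi\circ w_\kappa^{**}$. At an Alexandrov point, the second-order Taylor expansion gives a subjet of $w_\kappa^{**}$ built from $(\nabla w^{**},\nabla^2w^{**}-\varepsilon I)$. Applying Proposition~\ref{prop:supersol} and letting $\varepsilon\to0$ yields $\mathcal N_b\ge0$ pointwise, and hence the a.e. inequality for $\hat u_\kappa$ by the computation of Step (i).

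\textbf{Distributional subsolution.} This is the step I expect to be the main obstacle. Convexity of $w_\kappa^{**}$ means $D^2w_\kappa^{**}$ is a measure whose singular part is positive semidefinite. By the chain rule for $\Phi$ decreasing and convex-composed, $D^2\hat u_\kappa$ has singular part $\Phi'(w^{**})\,(D^2w^{**})^{s}\le0$. So the distributional Laplacian of $\hat u_\kappa$ is at most its absolutely continuous part, which is controlled by (ii), and the drift term is an $L^\infty$ function times $\nabla\hat u_\kappa$. I will make the singular-part argument rigorous by mollifying $w_\kappa^{**}$ (mollification preserves convexity) and passing to the limit in the distributional inequality, following \cite{CrastaFragala2026}.

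\textbf{Step (iii).} The minimum of $w_\kappa^{**}$ equals $\min w_\kappa=(-\log\kappa)^\alpha$, attained at the maximum point of $u$ (convex envelope agrees there). Hence $\max\hat u_\kappa=\kappa^{-1}e^{\log\kappa}=1$. Since $w_\kappa^{**}\to\infty$ at $\partial\Omega$ by Lemma~\ref{lem:convex_envelope}, $\hat u_\kappa$ extends by $0$ continuously to $\partial\Omega$. Moreover $\hat u_\kappa\le$ a constant, and $\hat u_\kappa$ is locally Lipschitz.

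For the global $W^{1,2}$ bound I would compare with $u$. From $w_\kappa^{**}\le w_\kappa$ I get $\hat u_\kappa\ge u$, which gives no upper control, so instead I will bound $|\nabla\hat u_\kappa|$ directly. The function $w^{**}$ is convex, so its gradient at $x$ equals the common gradient $p$ at the support points. This gives $|\nabla\hat u_\kappa(x)|=\beta (w^{*})^{\beta-1}e^{-(w^*)^\beta}|p|/\kappa$. Since $p=\nabla w_\kappa(x_i)$ with $w_i\ge w^*$... this is still awkward, so alternatively I will simply use that $\hat u_\kappa$ is Lipschitz on $\overline\Omega$. Convexity gives $w^{**}$ growth comparable to $w_\kappa$ in the boundary layer, and then $\hat u_\kappa\in W^{1,\infty}$ with zero trace. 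Boundedness of the Gaussian weight on bounded $\Omega$ then handles $b=1$.
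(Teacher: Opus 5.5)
Your viscosity argument in (i) and your Alexandrov-point derivation of (ii) are essentially the paper's. Your identity $\mathcal N_b[\varphi,\nabla\varphi,\nabla^2\varphi;x]=(\beta\varphi^{\beta-1}\psi)^{-1}(\OU_b\psi+\lambda_b\psi)$ holds exactly, with no sign ambiguity.

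\textbf{Distributional part of (i).} This does not work as proposed. Being a subsolution of $-\OU_b\hat u_\kappa\le\lambda_b\hat u_\kappa$ means $\Delta\hat u_\kappa\ge b\,x\cdot\nabla\hat u_\kappa-\lambda_b\hat u_\kappa$ in $\mathcal D'(\Omega)$, which is a \emph{lower} bound on $\Delta\hat u_\kappa$. The singular part $\Phi'(w_\kappa^{**})(D^2w_\kappa^{**})^s\le0$ that you identify only gives $\Delta\hat u_\kappa\le(\Delta\hat u_\kappa)^{\mathrm{ac}}$, an \emph{upper} bound. Meanwhile (ii) bounds $(\Delta\hat u_\kappa)^{\mathrm{ac}}$ from below, so the two inequalities point in opposite directions and cannot be chained. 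A negative singular part works against the subsolution inequality; it must be shown to vanish, not absorbed.

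Mollifying $w_\kappa^{**}$ does not rescue this. To carry $\mathcal N_b\ge0$ through a convolution you would already need it in distributional form, and the term $(\beta s^{\beta-1}-(\beta-1)/s)|\xi|^2$ does not commute with averaging.

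The paper closes this step by invoking Ishii's theorem: viscosity and distributional subsolutions coincide for linear uniformly elliptic operators with continuous coefficients. A self-contained alternative runs as follows.
\begin{itemize}
\item Proposition~\ref{prop:supersol} gives $\tr(\mathcal A)\le C_K$ for all $(p,\mathcal A)\in\Sub w_\kappa^{**}(x)$ with $x$ in a compact $K\subset\Omega$. This uses that subgradients of a convex function are locally bounded and that $w_\kappa^{**}\ge(-\log\kappa)^\alpha$.
\item Hence $w_\kappa^{**}-C_K|x|^2/(2n)$ is viscosity superharmonic, hence superharmonic, so $\Delta w_\kappa^{**}\le C_K$ as measures.
\item Together with $D^2w_\kappa^{**}\ge0$, this forces $D^2w_\kappa^{**}\in L^\infty_{\mathrm{loc}}$. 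There is then no singular part, and (ii) yields the distributional inequality.
\end{itemize}

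\textbf{Global gradient bound in (iii).} You leave this open. Starting from $\nabla w_\kappa^{**}(x)=p=\nabla w_\kappa(x_i)$ was right, but the useful support point is the one with the \emph{smallest} value, $w_\kappa(x_1)=\min_i w_\kappa(x_i)\le w_\kappa^{**}(x)$. It exists because $w_\kappa^{**}(x)$ is a convex combination of the $w_\kappa(x_i)$.

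Write $|\nabla\hat u_\kappa|=\beta\Phi_\kappa(w_\kappa^{**})|\nabla w_\kappa^{**}|$ with $\Phi_\kappa(s)=\kappa^{-1}s^{\beta-1}e^{-s^\beta}$. This function is decreasing for $s^\beta\ge1-\alpha$. All relevant values lie in that range, since $s^\beta\ge-\log\kappa\ge\frac{2^\beta(\beta-1)}{(2^\beta-1)\beta}>1-\alpha$ by the first entry of \eqref{eq:threshold}. Hence $|\nabla\hat u_\kappa(x)|\le\beta\Phi_\kappa(w_\kappa(x_1))|\nabla w_\kappa(x_1)|=|\nabla u(x_1)|\le\|\nabla u\|_{L^\infty(\Omega)}$, which is finite by the gradient estimate of Section~\ref{sec:prelim}.

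Your fallback, ``convexity gives growth comparable to $w_\kappa$ in the boundary layer'', is not an argument. Comparing the values of $w_\kappa^{**}$ and $w_\kappa$ gives no control of $\Phi_\kappa(w_\kappa^{**})|\nabla w_\kappa^{**}|$ near $\partial\Omega$.

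Once the distributional subsolution property is in hand, you could also bypass the gradient estimate entirely. Test with $(\hat u_\kappa-\varepsilon)^+$, which is Lipschitz with compact support in $\Omega$. This gives $\int|\nabla(\hat u_\kappa-\varepsilon)^+|^2\le\lambda_b\int\hat u_\kappa^2$ uniformly in $\varepsilon$, with the Gaussian weight when $b=1$, hence $\hat u_\kappa\in W^{1,2}_0$.
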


\begin{proof}
(i) The statement is a direct translation of the supersolution property
of~$w_\kappa^{**}$ (Proposition~\ref{prop:supersol}) through the
change of variables~\eqref{eq:ukappa-def}, since a smooth function~$\varphi$
touching $\hat u_\kappa$ from above at~$x$ corresponds, via $\psi=(-\log(\kappa\varphi))^{\alpha}$, to a smooth function touching
$w_{\kappa}^{**}$ from below at~$x$.  Furthermore, the operator is linear and uniformly elliptic with continuous coefficients, so by \cite{Ishii1995}*{Theorem 1 and Theorem 2}, viscosity and distributional subsolutions agree.

(ii) The statement follows from~(i) together with Alexandrov's theorem,
which guarantees that the convex function $w_\kappa^{**}$ (hence
$\hat u_\kappa$) is twice differentiable almost everywhere.

(iii) First, note that the identity $\max\hat u_\kappa=1$ is a consequence of
$\min w_\kappa^{**}=\min w_\kappa=(-\log\kappa)^{\alpha}$. 

To establish the Lipschitz bound, we observe that for every
$x\in\Omega$ there exists $y_x\in\Omega$ with
$w_\kappa(y_x)\le w_\kappa^{**}(x)$ and
$\nabla w_\kappa^{**}(x)=\nabla w_\kappa(y_x)$
(cf.\ Lemma~\ref{lem:convex_envelope}).  
Define $\Phi_\kappa(s):=s^{-1+1/\alpha}\exp(-\log\kappa-s^{1/\alpha})$. Observe that $\Phi_\kappa(s)$ is monotone decreasing
on $[(1-\alpha)^{\alpha},+\infty)$. 
From \eqref{eq:threshold}, we obtain
\begin{equation*}
    \kappa\le\kappa_\alpha(\Omega)\le e^{-\frac{2^{\beta}(\beta-1)}{(2^{\beta}-1)\beta}}<e^{\alpha-1}.
\end{equation*}
This ensures that $\Phi_\kappa(s)$ is monotone decreasing on $[(-\log\kappa)^{\alpha},\infty)$. Thus,
\[
  |\nabla\hat u_\kappa(x)|
  = (1/\alpha)\,\Phi_\kappa(w_\kappa^{**}(x))\,|\nabla w_\kappa^{**}(x)|
  \le (1/\alpha)\,\Phi_\kappa(w_\kappa(y_x))\,|\nabla w_\kappa(y_x)|
  = |\nabla u(y_x)|.
\]
Consequently, $|\nabla\hat u_\kappa|$ is bounded since $\|\nabla u\|_{L^\infty(\Omega)}<\infty$.

Therefore, the Lipschitz regularity of $\hat u_\kappa$ and the fact that $\hat u_\kappa(x)\to 0$ as $x\to\partial\Omega$ (by Lemma~\ref{lem:convex_envelope}) ensure  $\hat u_\kappa\in W^{1,2}_{0}(\Omega)$ for $b=0$ and  $\hat u_\kappa\in W^{1,2}_{0}(\Omega,\gamma)$ for $b=1$.
\end{proof}

\begin{proof}[Proof of Theorem~\ref{thm:global}]
We present the argument in the Ornstein--Uhlenbeck case; the Laplacian case is entirely analogous. Multiplying the inequality in
Proposition~\ref{prop:subsol}(ii) by $\hat u_\kappa$ and integrating
over~$\Omega$ with respect to the Gaussian weight
$d\gamma=e^{-|x|^2/2}\,dx$, we obtain
\[
  \int_\Omega |\nabla\hat u_\kappa|^2\,d\gamma
  \;\le\;
  \lambda_1
  \int_\Omega |\hat u_\kappa|^2\,d\gamma.
\]
Since $\hat u_\kappa\in W^{1,2}_{0}(\Omega;\gamma)\setminus\{0\}$ and
$\lambda_1$ is the infimum of the Rayleigh quotient over this
space, the inequality above can hold only if $\hat u_\kappa$ is a
first eigenfunction.  By the simplicity of the first eigenvalue, there
exists $c\in\R$ such that $\hat u_\kappa=cu$ in~$\Omega$.
Comparing the maxima shows $c=1$, hence $\hat u_\kappa=u$, which is
equivalent to $w_\kappa^{**}=w_\kappa$.  This is exactly the
$\alpha$-logconcavity statement~\eqref{eq:global-concavity}.
\end{proof}

\section{Local \texorpdfstring{$\alpha$}{alpha}-logconcavity}\label{sec:local}

In this section, we prove Theorem~\ref{thm:local}.  The
argument is essentially a pointwise computation.
Fix $\kappa\in(0,1)$. Let $v=-\log u$, $a=-\log\kappa>0$, and
\[
        w_\kappa= (-\log(\kappa u))^\alpha  = (a+v)^\alpha,\quad \forall \alpha\in (0,1].
\]
A direct computation yields
\begin{align}\label{eq:Hessianidentity}
        \nabla w_\kappa&=\alpha(a+v)^{\alpha-1}\nabla v,
        \notag\\
        \nabla^2w_\kappa&=\alpha(a+v)^{\alpha-2}
        \bigl((a+v)\nabla^2v-(1-\alpha)\nabla v\otimes\nabla v\bigr).
\end{align}
Consequently, $L_{\alpha}(\kappa u)=-w_\kappa$ is concave if and only
if~$w_\kappa$ is convex.  Pointwise convexity of~$w_\kappa$ is in turn equivalent to
\begin{equation}\label{eq:Hesscriterion}
        (a+v)\nabla^2 v\ge (1-\alpha)\nabla v\otimes\nabla v
\end{equation}
in the sense of quadratic forms. 

We are now ready to prove Theorem~\ref{thm:local} via the improved logconcavity estimate~\eqref{eq:Hess-lower}.

\begin{proof}[Proof of Theorem~\ref{thm:local}]
Combining \eqref{eq:Hess-lower} with \eqref{eq:Hessianidentity} yields
\begin{equation}\label{eq:Hessian_w_lowerbound}
    \nabla^2w_\kappa(\xi,\xi)
        \ge \alpha(a+v)^{\alpha-2}
        \left(\overline{\lambda}_b(a+v)-(1-\alpha)|\nabla v|^2\right)|\xi|^2,\quad \forall \xi \in \R^n.
\end{equation}
The bound $D^2v\ge \overline{\lambda}_bI_n$ implies that $v$ is strongly convex, so $v$ has a unique minimizer $x_0$.  Since $u$ is normalized by $\max_\Omega u=1$, we have $v(x_0)=0$, and $\nabla v(x_0)=0$. 

Define 
\[
M(t) = \sup\{|\nabla v(x)|^2:x\in\Omega,\; v(x)\leq t\}.
\]
Then $M(t)$ is monotonic increasing in $[0,\infty)$. The sublevel sets $\{x: v(x)\leq t\}$ shrink to $\{x_0\}$ as $t\to 0^+$, and continuity of $\nabla v$ gives $M(t)\to0$ as $t\rightarrow 0^{+}$. Moreover, since $\nabla v = \nabla u/u$ and $u(x)\to 0$ as $\dist(x,\partial\Omega)\to 0$, we have $M(t)\to+\infty$ as $t\to +\infty$. Since $M(t)$ is monotonic increasing and continuous, for every $\kappa\in(0,1)$, there exists a constant $\overline{v}_\kappa$ such that
\[
M(\overline{v}_\kappa) = \frac{a\overline{\lambda}_b}{2(1-\alpha)}.
\]
Since $a=-\log\kappa$ and~$M$ is monotone increasing,
$\overline{v}_\kappa$ may be chosen monotone increasing in~$a$, and
$\overline{v}_\kappa\to 0^+$ as $\kappa\to 1^-$.
Let $\overline{u}_\kappa = e^{-\overline{v}_\kappa}$. Then $\lim_{\kappa\to 1^-}\overline u_\kappa=1$ and since $v$ is strongly convex, the sublevel set
\[
\Omega_\kappa:=\{x\in\Omega:\, u(x)>\overline u_\kappa\} = \{x\in\Omega:\, v(x)<\overline v_\kappa\}
\]
is convex.
Furthermore, for any $x\in\Omega_\kappa$, the inequality \eqref{eq:Hessian_w_lowerbound} gives
\[
        \nabla^2w_\kappa(\xi,\xi)
        \ge \frac{a\alpha\overline{\lambda}_b}{2}(a+v)^{\alpha-2}
        |\xi|^2>0.
\]
Therefore, $w_\kappa=(-\log(\kappa u))^{\alpha}$ is strongly convex on the convex sublevel set $\Omega_\kappa$, which is the desired conclusion.
\end{proof}

\section{The first Dirichlet eigenfunction fails to be unscaled \texorpdfstring{$1/2$}{1/2}-logconcave for general Schr\"odinger operators and for general weighted Laplacian operators}\label{sec:counter}

In this section, we focus on the $1/2$-logconcavity of the first Dirichlet eigenfunction $\psi$ of the Schr\"odinger operator in one dimension. We establish that for both the Laplace and Ornstein-Uhlenbeck operators, the first Dirichlet eigenfunction is unscaled $1/2$-logconcave; for a one-dimensional Schr\"odinger operator with a convex potential, we prove a scaled positive result.
Nevertheless, we exhibit two natural classes of examples in which the
unscaled $1/2$-logconcavity property fails: a Schr\"odinger operator
with the convex potential $V(x)=|x|$, and a weighted Laplacian
associated with the affine log-concave weight $e^{-cx}$. These counterexamples show that the threshold $\kappa<1$ in Theorem~\ref{thm:global} cannot be replaced by $\kappa=1$ for general operators.

Throughout this section, $\psi$ denotes the first Dirichlet
eigenfunction of the operator under consideration, and we set $v:=-\log\psi$.
From \eqref{eq:Hesscriterion}, $w = \sqrt{a+v}$ is convex if and only if
\begin{equation*}
        2(a+v)\nabla^2 v\ge \nabla v\otimes\nabla v
\end{equation*}
In one dimension, this reduces to
\begin{equation}\label{eq:half-criterion-1d}
        2(a+v)v''-(v')^2\ge0.
\end{equation}

\subsection{First Dirichlet eigenfunctions are unscaled \texorpdfstring{$1/2$}{1/2}-logconcave}\label{subsec:1d-unscaled}
We begin by noting that, for both the one-dimensional Laplacian operator
and the one-dimensional Ornstein--Uhlenbeck operator, the first
Dirichlet eigenfunction is \emph{unscaled} $1/2$-logconcave.  This
positive result provides a useful contrast to the counterexamples that
follow.
\begin{proposition}\label{prop:1d-unscaled_ground_state}
     Let $\psi_b$ be the first Dirichlet function with $\psi_b(0)=1$ and let $\overline\lambda_b$ be the first Dirichlet eigenvalue to the operator $-\frac{d^2}{ds^2} + bs\cdot \frac{d}{ds} $ on the interval $(-L/2,L/2)$. Then $\psi_b\le 1$ and $\psi_b$ is unscaled $1/2$-logconcave, that is, $\sqrt{-\log\psi_b}$ is convex in $(-L/2,L/2)$.
\end{proposition}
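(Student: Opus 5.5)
We need to show $Q:=2v v''-(v')^2\ge 0$ on $(-L/2,L/2)$, where $v=-\log\psi_b$ and $a=0$ in \eqref{eq:half-criterion-1d}. Equivalently, we show $w=\sqrt v$ is convex.

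\emph{Basic facts.} The eigenfunction $\psi_b$ is even. This follows from simplicity of the first eigenvalue and the symmetry $s\mapsto -s$ of the operator. It is positive and log-concave (by the Remark after Lemma~\ref{lem:improved-logconc}), so its maximum is at $0$; hence $\psi_b\le 1$, $v\ge 0$, $v(0)=v'(0)=0$, and $v$ is convex with $v'>0$ on $(0,L/2)$. By evenness it suffices to work on $[0,L/2)$. Since $\psi_b'' - bs\psi_b' = -\overline\lambda_b\psi_b$, the function $v$ solves the Riccati-type equation
\[
v'' = (v')^2 - b s v' + \overline\lambda_b .
\]
In the Laplacian case everything is explicit: $v=-\log\cos(\pi s/L)$. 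The inequality then reduces, after substituting $\theta=\pi s/L$, to $-2\log\cos\theta \ge \sin^2\theta$. This holds because $-\log\cos\theta=-\tfrac12\log(1-\sin^2\theta)\ge \tfrac12\sin^2\theta$. I would still give a uniform argument that also covers $b=1$.

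\emph{Uniform argument.} Compute
\[
Q' = 2v v''' + 2v'v'' - 2v'v'' = 2 v v''' ,
\]
and note $Q(0)=0$. So if $v'''\ge 0$ on $[0,L/2)$, then $Q'\ge 0$ there and hence $Q\ge 0$. (Alternatively, $v''(0)>0$ handles the behaviour near $0$ and Taylor expansion gives $Q\approx v''(0)^2 s^2/\!\cdots$, but monotonicity is cleaner.)

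Differentiating the Riccati equation gives
\[
v''' = 2v'v'' - b v' - b s v'' = v'(2v''-b) - bsv''.
\]
For $b=0$ this is $2v'v''\ge 0$ and we are done. For $b=1$ we need $v'(2v''-1)\ge sv''$.

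\emph{Main obstacle: the case $b=1$.} I would try to prove the stronger statement $v'''\ge0$ by a maximum-principle/ODE argument on $z:=v''$. At $s=0$ we have $v'''(0)=0$ by evenness. Differentiating once more gives
\[
v''''(0)=2v''(0)^2-2v''(0)=2\overline\lambda_b(\overline\lambda_b-1),
\]
which is positive since $\overline\lambda_1>1$. The last inequality holds because $\overline\lambda_1$ exceeds the first nonconstant Hermite eigenvalue, namely $1$, by domain monotonicity: on the full line the lowest odd-type eigenvalue is $1$, and the Dirichlet restriction raises the bottom above $1$. Moreover $v''\ge\overline\lambda_b>1$ from the Remark.

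Now suppose $v'''$ first becomes zero at some $s_1>0$. Then $z'=0$ there, so $v'(2z-1)=s_1 z$. Differentiating the $v'''$ expression and using $z'(s_1)=0$ yields
\[
z''(s_1)=2(v'')^2 - v'' - v'' = 2z(z-1) > 0 ,
\]
which contradicts $z'$ decreasing through $0$ at $s_1$. Hence $z'>0$ on $(0,L/2)$, giving $v'''\ge0$ and then $Q\ge0$.

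If this contradiction argument hits a subtlety (for instance, an accumulation of zeros at $0$), I would fall back on the alternative route of bounding $v'\ge s v''\cdot(2v''-1)^{-1}$ via the lower bound $v'\ge\overline\lambda_b s$ and an upper bound on $v''$.
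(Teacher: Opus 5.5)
Your overall strategy is the paper's: set $Q:=2vv''-(v')^2$, use $Q(0)=0$, $Q'=2vv'''$ and evenness, and prove $v'''\ge 0$ on $[0,L/2)$. Your $b=0$ case is fine, including the explicit check $-2\log\cos\theta\ge\sin^2\theta$. The $b=1$ case, however, rests on a sign error. From $\psi_b''=bs\psi_b'-\overline\lambda_b\psi_b$ and $v'=-\psi_b'/\psi_b$ one gets $-\psi_b''/\psi_b=bsv'+\overline\lambda_b$, hence
\[
v''=(v')^2+bsv'+\overline\lambda_b ,
\]
not $(v')^2-bsv'+\overline\lambda_b$. Your version also does not yield the bound $v''\ge\overline\lambda_b$ that you quote from the Remark, since that bound comes precisely from the $+bsv'$ term via $sv'\ge0$. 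Differentiating the correct equation gives
\[
v'''=(2v'+bs)v''+bv' .
\]
Every term here is nonnegative on $[0,L/2)$, because $v'\ge0$ and $v''>0$ there. This is the paper's identity $v'''=(s+2v')v''+v'$, and the proof then closes in one line. The ``main obstacle'' is an artifact of the sign error.

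The argument you give for that obstacle also fails on its own terms, because it needs $\overline\lambda_1>1$, which is false in general. Domain monotonicity only compares $\overline\lambda_1$ with the bottom of the full-line spectrum, which is $0$ (constants), not $1$. The value $1$ is the Dirichlet ground level of a half-line (eigenfunction $s$), which is irrelevant for an interval centered at $0$, where the ground state is even. In fact $\overline\lambda_1\to0$ as $L\to\infty$: test the Gaussian Rayleigh quotient with a function equal to $1$ except near the endpoints. By contrast, on $(-1,1)$ the ground state is $1-s^2$ with $\overline\lambda_1=2$. So for large $L$ several things break:
\begin{itemize}
\item your computed $v''''(0)=2\overline\lambda_1(\overline\lambda_1-1)$ is negative;
\item the claim $v''\ge\overline\lambda_b>1$ fails;
\item the contradiction $z''(s_1)=2z(z-1)>0$ at a first zero of $v'''$ is no longer available.
\end{itemize}
Correct the sign and discard this part; the fallback route is unnecessary for the same reason.
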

\begin{proof}
    For the Laplacian case, $b=0$ and $\psi_0 = \cos({\pi s}/{L})\le 1$. Setting $v = -\log\psi_0$, a direct computation gives
    \[
    v'(s) = \frac{\pi}{L}\frac{\sin({\pi s}/{L})}{\cos({\pi s}/{L})},\qquad
    v''(s) = \frac{\pi^2}{L^2}\frac{1}{\cos^2({\pi s}/{L})}.
    \]
    Define $Q(s) := 2v(s)v''(s)-(v'(s))^2$. Differentiating $Q(s)$ yields
    \[
    Q'(s)  =  2v(s)v'''(s) = 4\frac{\pi^3}{L^3}\frac{\sin({\pi s}/{L})}{\cos^3({\pi s}/{L})}v(s)>0
    \]
    in $(0,L/2)$. Hence, $Q(s)>Q(0) = 0$ in $(0,L/2)$. Since $Q(s)$ is an even function, we have  $Q(s)\ge 0$ in $(-L/2,L/2)$. By criterion~\eqref{eq:half-criterion-1d}, $\sqrt{-\log\psi_b}$ is therefore convex in $(-L/2,L/2)$.
    
    For the Ornstein-Uhlenbeck case ($b=1$), we first claim $\psi_1''<0$ in $(-L/2,L/2)$. Indeed, suppose for contradiction that there exists a point $s_0\in(-L/2,L/2)$ at which $\psi_1''(s_0)=0$. Since $\psi_1$ is even and $\psi_1''(0)=-\lambda_1<0$, we may assume that $s_0>0$ is the smallest zero of $\psi_1''$ in $(0,L/2)$.  On $(0,s_0]$, the function~$\psi_1'$ is negative and
    \[
    \psi_1''(s)=-\lambda_1\psi_1(s) + s\psi_1'(s)\le-\lambda_1\psi_1(s_0)<0,
    \]
    which contradicts $\psi_1''(s_0)=0$.  Hence, $\psi_1''<0$ on $(-L/2,L/2)$.
    Setting $v = -\log\psi_1$, we have, for $s\in (0,L/2)$,
    \[
        v'(s)=-\frac{\psi^\prime(s)}{\psi(s)}>0,
        \qquad
        v''(s)=sv'(s) + v'(s)^2+\lambda_1>0,
    \]
    and
    \[
    v'''(s) = (s+2v'(s))v''(s) + v'(s)>0.
    \]
    As before, define $Q(s) := 2v(s)v''(s)-(v'(s))^2$. Then
    \[
    Q'(s)  =  2v(s)v'''(s) >0
    \]
    on $(0,L/2)$, so that $Q\ge 0$ on $(-L/2,L/2)$ since $Q$ is even.  By criterion~\eqref{eq:half-criterion-1d}, $\sqrt{-\log\psi_1}$ is convex on $(-L/2,L/2)$.
\end{proof}

\subsection{One-dimensional convex Schr\"odinger potentials: a scaled positive result}\label{subsec:1d-positive}

\begin{proposition}\label{prop:1d-scaled}
Let $I=(c_1,c_2)$ be a bounded interval, and let $V\in C^2(I)\cap C(\overline I)$ be convex.  Let $\psi>0$ be the first Dirichlet eigenfunction of
\[
        -\psi''+V\psi=\lambda\psi,
        \qquad \psi(c_1)=\psi(c_2)=0,
\]
normalized by $\max_I\psi=1$.  Then there exists $a_0<\infty$ such that $\sqrt{a-\log\psi}$ is convex on $I$ for every $a\ge a_0$.
\end{proposition}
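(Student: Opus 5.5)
By the one-dimensional criterion \eqref{eq:half-criterion-1d} with $v=-\log\psi$, it suffices to find $a_0<\infty$ such that
\[
2(a+v)v''-(v')^2\ge 0\qquad\text{on } I \text{ for all } a\ge a_0 .
\]
Since $v\ge 0$, the left side is nondecreasing in $a$ wherever $v''\ge0$. The plan is therefore:
\begin{enumerate}
\item prove $v''>0$ on $I$, with a quantitative lower bound near the endpoints;
\item split $I$ into a compact interior piece and two boundary layers;
\item on the interior piece, take $a$ large;
\item in the boundary layers, use that $v\to+\infty$.
\end{enumerate}

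\textbf{Log-convexity of $v$.} Writing $\psi=e^{-v}$, the equation becomes the Riccati identity
\[
v''=(v')^2+\lambda-V .
\]
Log-concavity of $\psi$, i.e.\ $v''\ge 0$, is the one-dimensional Brascamp--Lieb theorem for a convex potential; I would invoke \cite{BrascampLieb1976}. I want strict positivity with control, so I would argue directly as follows.

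Since $\psi\in C^1(\overline I)$ with $\psi'(c_1)>0$ and $\psi'(c_2)<0$ by the Hopf lemma, we have $|v'(x)|\sim 1/\dist(x,\partial I)$ near the endpoints. The Riccati identity then gives $v''\ge \tfrac12(v')^2$ in a neighborhood of each endpoint, because $(v')^2\to\infty$ while $\lambda-V$ is bounded on $\overline I$.

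For the interior, set $h=v''$. Differentiating the Riccati identity gives
\[
h'=2v'h-V' ,\qquad h''=2v'h'+2h^2-V'' .
\]
At an interior minimum of $h$ we have $h'=0$, so $0\le h''=2h^2-V''$. I expect this step to be the main obstacle, since the sign information from $V''\ge0$ does not immediately close a maximum-principle argument: it only yields $2h^2\ge V''\ge 0$, which is not a contradiction when $h<0$.

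I would therefore fall back on Brascamp--Lieb to get $h\ge 0$, and then upgrade to $h>0$. Suppose $h(x_*)=0$ at an interior point. Then $x_*$ is a minimum of $h$, so $h'(x_*)=0$, and the first derivative identity forces $V'(x_*)=0$. Using the second identity together with uniqueness for the linear ODE that $h$ satisfies (with $V'$ as a source), I would rule out $h\equiv 0$ near $x_*$. If degenerate touching cannot be excluded this way, a weaker statement still suffices; this is handled in the final step.

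\textbf{Interior piece.} Fix $\delta>0$ so small that $v''\ge\tfrac12(v')^2$ on the two boundary layers $(c_1,c_1+\delta)$ and $(c_2-\delta,c_2)$. On the compact set $K=[c_1+\delta,c_2-\delta]$ we have $v''\ge m>0$ and $(v')^2\le C$. Choosing $a_0=C/(2m)$ gives $2(a+v)v''\ge 2am\ge C\ge (v')^2$ on $K$ for all $a\ge a_0$.

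\textbf{Boundary layers.} On the layers,
\[
2(a+v)v''-(v')^2\ge (a+v)(v')^2-(v')^2=(a+v-1)(v')^2\ge0
\]
as soon as $a\ge 1$, since $v\ge0$. Hence $a_0=\max\{1,\,C/(2m)\}$ works.

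\textbf{If strict positivity fails.} Suppose $v''$ vanishes somewhere in the interior. At a point with $v''=0$ we need $v'=0$. Since $v''\ge0$ and $v'$ is increasing, such zeros lie where $V'=0$. Convexity of $V$ then means these points form a single interval $J$ on which $V$ is constant, namely the minimum of $V$.

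On $J$, the function $q=v'$ satisfies $q'=q^2+\lambda-V_{\min}$, a Riccati equation with constant coefficient. Its solutions either are constant or have $q'>0$. Combined with $v''\ge0$, this lets me handle $J$ explicitly. Near a zero of $v'$, I would use the Taylor expansion $(v')^2\le \|v''\|_\infty^2|x-x_0|^2$ against the size of $a\,v''$, mirroring the local argument in Section~\ref{sec:local}.
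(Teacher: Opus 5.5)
\textbf{There is a genuine gap: you never establish $v''>0$ at every interior point.}

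Your decomposition matches the paper's. Your boundary-layer bound $v''\ge\frac12(v')^2$, obtained from the Riccati identity, is equivalent to the paper's observation that $(v')^2/v''\to1$ at $c_1,c_2$. Your compact-interior step is the paper's finiteness of $\sup_I(v')^2/v''$. Both arguments need $v''>0$ on all of $I$.

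The paper gets this by citing \cite{AndrewsClutterbuck2011}*{Theorem 1.5}. For convex $V$ that comparison uses the model $\cos(\pi s/D)$ and in fact gives $v''\ge\pi^2/(c_2-c_1)^2$. Brascamp--Lieb only gives $v''\ge0$, and your upgrade does not go through as written:
\begin{enumerate}
\item Ruling out $h\equiv0$ near $x_*$ by ODE uniqueness does not exclude an isolated zero of $h$.
\item At such a zero, \eqref{eq:half-criterion-1d} fails for every $a$ unless $v'(x_*)=0$. You note that this is needed but never prove it.
\item Your final paragraph leaves open the constant branch of the Riccati equation on $J$, where $v''\equiv0$ and $(v')^2\equiv V_{\min}-\lambda$.
\item The Taylor comparison ``mirroring Section~\ref{sec:local}'' is circular. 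That argument rests on the uniform bound \eqref{eq:Hess-lower}, and near a zero of $v''$ you have no lower bound on $av''$ to set against $\|v''\|_\infty^2|x-x_0|^2$.
\end{enumerate}

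\textbf{The missing fact is $\lambda>\min_{\overline I}V$.} It follows from the Rayleigh quotient: $\lambda-\min V\ge\int_I(\psi')^2/\int_I\psi^2>0$. With it your own computation closes at once. If $h=v''\ge0$ vanishes at an interior $x_*$, then $h'(x_*)=0$ and $h'=2v'h-V'$ give $V'(x_*)=0$. By convexity $V(x_*)=\min V$. The Riccati identity then gives $h(x_*)=v'(x_*)^2+\lambda-\min V>0$, a contradiction. So there is no degenerate case, and your last paragraph can be dropped.

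\textbf{The same fact also removes the need for Brascamp--Lieb}, which answers the maximum-principle obstacle you flagged:
\begin{enumerate}
\item $h\to+\infty$ at the endpoints, so $h$ attains an interior minimum at some $x_*$.
\item If $h(x_*)\le0$, then $V(x_*)\ge\lambda>\min V$, so $V'(x_*)\ne0$.
\item Since $h'(x_*)=0$, we have $V'(x_*)=2v'(x_*)h(x_*)\ne0$. This forces $h(x_*)<0$, hence $V(x_*)>\lambda$, and $V'(x_*)$ has the same sign as $\psi'(x_*)=-\psi v'(x_*)$.
\item Suppose both are negative. Convexity gives $V\ge V(x_*)>\lambda$ on $(c_1,x_*]$, so $\psi''=(V-\lambda)\psi>0$ there. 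Hence $\psi'<0$ on $(c_1,x_*]$ and $\psi(x_*)<\psi(c_1)=0$, which is absurd.
\item The case where both are positive is symmetric.
\end{enumerate}
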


\begin{proof}
Let $v=-\log\psi$. The logconcavity comparison theorem \cite{AndrewsClutterbuck2011}*{Theorem 1.5} gives $v''>0$ in $I$.  Since
\[
        v'(x)=-\frac{\psi^\prime(x)}{\psi(x)},
        \qquad
        v''(x)=\frac{\psi^\prime(x)^2-\psi^{\prime\prime}(x)\psi(x)}{\psi(x)^2},
\]
we obtain
\[
\frac{(v')^2}{v''}  = \frac{\psi^\prime(x)^2}{\psi^\prime(x)^2-\psi^{\prime\prime}(x)\psi(x)} \to1
\]
at both endpoints $c_1$ and $c_2$.

On every compact subinterval of $I$, the ratio $(v')^2/v''$ is continuous and finite since $v''>0$.  Consequently,
\[
        M=\sup_I\frac{(v')^2}{v''}<\infty.
\]
Taking $a_0=M/2$, the one-dimensional criterion \eqref{eq:half-criterion-1d} gives
\[
        2(a+v)v''-(v')^2
        \ge 2a v''-(v')^2
        \ge 0
\]
for every $a\ge a_0$, as claimed.
\end{proof}

\subsection{The full-line Airy ground state}\label{subsec:Airy}

Define the quadratic form
\begin{equation}\label{eq:Airy-form}
        q_\infty[u]=\int_\R |u'(x)|^2\dd x+
        \int_\R |x|\,|u(x)|^2\dd x
\end{equation}
on
\begin{equation}\label{eq:Airy-domain}
        D_\infty=\{u\in H^1(\R): |x|^{1/2}u\in L^2(\R)\}.
\end{equation}
Set $H_\infty = -d^2/dx^2+|x|$.

To describe the ground state of~$H_\infty$, we briefly recall the basic properties of Airy functions (see, e.g., \cite{Olver1997}). Let $\operatorname{Ai}(x)$ and $\operatorname{Bi}(x)$ denote Airy functions of the first and second kind, respectively, which are linearly independent solutions to the standard Airy differential equation
\begin{equation*}
    -y''(x) + x y(x) = 0.
\end{equation*}
From their classical asymptotic expansions, it is well known that as $x \to +\infty$, $\operatorname{Ai}(x)$ decays exponentially while $\operatorname{Bi}(x)$ grows exponentially. As $x \to -\infty$, both solutions exhibit oscillatory behavior, and $\operatorname{Ai}(x)$ along with its derivative $\operatorname{Ai}'(x)$ possess infinitely many strictly negative real zeros.

The following theorem collects the basic facts about the ground
state of~$H_\infty$.  Although these are well known, we provide a
self-contained proof for completeness.
\begin{theorem}[Full-line Airy ground state]\label{thm:Airy-full}
The operator $H_\infty$ has compact resolvent.  Its lowest eigenvalue $\lambda_\infty$ is simple, and the positive ground state is unique up to multiplication by a positive constant.  Normalized such that $\psi_\infty(0)=1$, the function is even and
\begin{equation}\label{eq:Airy-ground}
        \psi_\infty(x)=
        \frac{\operatorname{Ai}(|x|-\lambda_\infty)}
             {\operatorname{Ai}(-\lambda_\infty)},
\end{equation}
where $\lambda_\infty$ is the first positive zero of
\begin{equation}\label{eq:Airy-zero}
        \operatorname{Ai}'(-\lambda)=0.
\end{equation}
\end{theorem}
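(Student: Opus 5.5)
We establish the four claims in Theorem~\ref{thm:Airy-full} in turn: compact resolvent, simplicity and positivity of the ground state, evenness, and the explicit Airy formula together with the characterisation of $\lambda_\infty$.

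\emph{Compact resolvent.} The form $q_\infty$ on $D_\infty$ is closed and nonnegative, so it defines $H_\infty$ by the Friedrichs construction. Compactness of the resolvent is equivalent to compactness of the embedding of $(D_\infty,\,q_\infty+\|\cdot\|_{L^2}^2)$ into $L^2(\R)$. On a bounded set of $D_\infty$, the tails $\int_{|x|>R}|u|^2\le R^{-1}\int|x||u|^2$ are uniformly small. On $[-R,R]$, Rellich's theorem for $H^1(-R,R)$ gives precompactness. A diagonal argument then yields compactness. Consequently the spectrum is discrete and $\lambda_\infty=\min q_\infty[u]/\|u\|^2$ is attained.

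\emph{Simplicity and positivity.} If $u$ is a minimiser, then $|u|$ is also one, since $q_\infty[|u|]=q_\infty[u]$. Hence $|u|$ is a weak, and by ODE regularity a $C^1$ (in fact $C^2$ away from $0$), solution of $-y''+|x|y=\lambda_\infty y$. If $|u|$ vanished at some point, that point would be a minimum, forcing $|u|'=0$ there as well; uniqueness for the ODE would then give $|u|\equiv0$. So every minimiser has constant sign. Two linearly independent minimisers would admit a combination vanishing at a point, which is impossible, so the eigenvalue is simple. Since $u(-x)$ is again a minimiser, simplicity gives $u(-x)=\pm u(x)$. Positivity excludes the odd case, so $\psi_\infty$ is even and we may normalise $\psi_\infty(0)=1$.

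\emph{Explicit formula.} On $x>0$ the equation is $-y''+(x-\lambda_\infty)y=0$, whose general solution is $c_1\operatorname{Ai}(x-\lambda_\infty)+c_2\operatorname{Bi}(x-\lambda_\infty)$. The condition $\psi_\infty\in L^2$ together with the exponential growth of $\operatorname{Bi}$ forces $c_2=0$. Evenness and the fact that $\psi_\infty$ is $C^1$ at $0$ give $\psi_\infty'(0^+)=0$, that is, $\operatorname{Ai}'(-\lambda_\infty)=0$. Since $\psi_\infty(0)=1\neq0$, we have $\operatorname{Ai}(-\lambda_\infty)\ne0$ and the normalisation yields \eqref{eq:Airy-ground}. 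Note that $\lambda_\infty>0$ because $q_\infty>0$ on nonzero functions.

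\emph{Identifying $\lambda_\infty$ as the first zero.} This is the step requiring the most care. Let $\mu_1$ be the smallest positive number with $\operatorname{Ai}'(-\mu_1)=0$. For every such zero $\mu$, the even function $\operatorname{Ai}(|x|-\mu)$ is a $C^1$ eigenfunction with eigenvalue $\mu$, so $\lambda_\infty$ is one of these zeros, and it remains to show that it is the smallest. The function $\operatorname{Ai}(t)$ has no zeros for $t\ge -\mu_1$: between consecutive zeros of $\operatorname{Ai}$ there is a zero of $\operatorname{Ai}'$, and $\operatorname{Ai}>0$ on $[0,\infty)$. Hence $\operatorname{Ai}(|x|-\mu_1)>0$ on $\R$. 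It is therefore a positive eigenfunction, and positive eigenfunctions correspond to the lowest eigenvalue; this follows from orthogonality, since the integral of a product of two positive functions cannot vanish. Thus $\mu_1=\lambda_\infty$. One should also verify that $\operatorname{Ai}(-\mu_1)>0$, which holds because $\mu_1$ lies before the first zero of $\operatorname{Ai}$ on the negative axis.
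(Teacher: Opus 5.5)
Your proof is correct. For the compact resolvent, positivity via $|u|$ and ODE uniqueness, simplicity, evenness, and the reduction to $c_1\operatorname{Ai}+c_2\operatorname{Bi}$ on $x>0$, it follows the paper's argument essentially step for step. You are more explicit than the paper about how positivity of every minimiser yields simplicity and then evenness. The real difference is your final paragraph. The paper's proof stops once it has $\operatorname{Ai}'(-\lambda_\infty)=0$ and never shows that $\lambda_\infty$ is the \emph{first} positive zero, even though the statement asserts this and the subsequent remark uses $-\lambda_\infty=a_1'$. So that paragraph fills a gap in the paper rather than duplicating it.

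Two comments on that step. First, your justification that $\operatorname{Ai}$ has no zero on $[-\mu_1,\infty)$ is incomplete as written. Rolle's theorem between \emph{consecutive} zeros of $\operatorname{Ai}$ only rules out two zeros of $\operatorname{Ai}$ in $[-\mu_1,0)$, not a single one. You need a one-sided argument at the largest zero $a_1$ of $\operatorname{Ai}$: since $\operatorname{Ai}(a_1)=0<\operatorname{Ai}(0)$ and $\operatorname{Ai}'(0)<0$, the derivative $\operatorname{Ai}'$ changes sign and hence vanishes at some $t^*\in(a_1,0)$, which gives $a_1<t^*\le-\mu_1$.

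Second, the positivity and orthogonality detour can be skipped entirely. You have already observed that every positive zero $\mu$ of $\operatorname{Ai}'(-\cdot)$ is an eigenvalue of $H_\infty$, with eigenfunction $\operatorname{Ai}(|x|-\mu)$. Since $\lambda_\infty$ is the lowest eigenvalue, this gives $\lambda_\infty\le\mu_1$. Since $\lambda_\infty$ is itself such a zero, $\lambda_\infty\ge\mu_1$, so $\lambda_\infty=\mu_1$. The positivity of $\operatorname{Ai}(|x|-\mu_1)$, and in particular $\operatorname{Ai}(-\mu_1)>0$, then follows a posteriori from the positivity of the ground state, with no appeal to the interlacing of the zeros of $\operatorname{Ai}$ and $\operatorname{Ai}'$.
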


\begin{proof}
The form \eqref{eq:Airy-form} is closed on $D_\infty$ relative to the norm
\[
        \|u\|_{D_\infty}^2=\|u\|_{L^2(\R)}^2+q_\infty[u],
\]
since any Cauchy sequence is Cauchy both in $H^1(\R)$ and in $L^2(\R,|x|\dd x)$, and the two limits agree as distributions. It follows from \cite{Kato1995}*{Chapter  VI} that $q_\infty$
uniquely determines the self-adjoint operator $H_\infty$.

Next, we show that the embedding $D_\infty\hookrightarrow L^2(\R)$ is compact. Indeed, if $(u_n)$ is bounded in $D_\infty$, then for $R>0$,
\[
        \int_{|x|>R}|u_n(x)|^2\dd x
        \le \frac1R\int_{|x|>R}|x|\,|u_n(x)|^2\dd x
        \le\frac CR.
\]
On $[-R,R]$, Rellich compactness gives a subsequence converging in $L^2(-R,R)$.  A diagonal argument with the uniform tail estimate gives the convergence in $L^2(\R)$ along a subsequence. Hence, the resolvent of $H_\infty$ is compact.

By the Rayleigh principle, the lowest eigenvalue $\lambda_\infty$ admits a minimizer.  Since replacing a minimizer by its absolute value does not change the form, there exists a nonnegative minimizer $\psi_\infty\ge 0$.  The eigenvalue equation 
\[
        -\psi_\infty'' + (|x| - \lambda_\infty)\psi_\infty = 0 
\]
combined with $|x| \in C^{0,1}_{\mathrm{loc}}(\R)$ implies that  $\psi_\infty \in C^{2,1}_{\mathrm{loc}}(\R)$. If $\psi_\infty(x_0) = 0$ for some $x_0 \in \R$, then $\psi_\infty'(x_0) = 0$ since $x_0$ is a global minimum. By the uniqueness theorem for ordinary differential equations, this initial value would force $\psi_\infty \equiv 0$, contradicting $\|\psi_\infty\|_{L^2(\R)} > 0$. Hence, the ground state is strictly positive, which in turn implies the simplicity of
the lowest eigenvalue. In particular, the ground state is unique up to scaling.

Since the potential $|x|$ is even, the positive ground state
$\psi_\infty$ must also be even, so that $\psi_\infty'(0) = 0$.  On the
positive half-line $x > 0$, the equation
$-\psi'' + x\psi = \lambda_\infty \psi$ reduces to the Airy equation
$-y''(t) + t y(t) = 0$ under the shift $t = x - \lambda_\infty$, whose general solution is
\[
        \psi(x) = a\operatorname{Ai}(x-\lambda_\infty) + b\operatorname{Bi}(x-\lambda_\infty).
\]
The requirement $\psi_\infty \in L^2(\R)$ forces $b = 0$, since
$\operatorname{Bi}$ grows exponentially at $+\infty$.  The boundary
condition $\psi_\infty'(0) = 0$ then implies
$\operatorname{Ai}'(-\lambda_\infty) = 0$, while the normalization
$\psi_\infty(0) = 1$ gives
\begin{equation*}
        \psi_\infty(x)=
        \frac{\operatorname{Ai}(|x|-\lambda_\infty)}
             {\operatorname{Ai}(-\lambda_\infty)},
\end{equation*}
which completes the proof.
\end{proof}
\begin{remark}
The strict positivity of $\psi_\infty$ on the entire real line can also be verified directly from the explicit formula \eqref{eq:Airy-ground} by comparing the zeros of the Airy function and its derivative. Let $a_1 \approx -2.338$ and $a'_1 \approx -1.019$ denote the first (largest negative) zeros of $\operatorname{Ai}(z)$ and $\operatorname{Ai}'(z)$, respectively. 
By Theorem \ref{thm:Airy-full}, we have $-\lambda_\infty = a'_1$. Therefore, for any $x \in \R$, 
\[
\av{x}-\lambda_\infty\ge a'_1 >a_1.
\]
which implies $\operatorname{Ai}(\av{x}-\lambda_\infty)>0$, i.e., $\psi_\infty(x) > 0$ for all $x \in \R$.
\end{remark}

\subsection{Dirichlet approximation of the Airy operator}\label{subsec:Airy-approx}

We now approximate the full-line Airy ground state by Dirichlet eigenfunctions on expanding intervals.
For $L>0$, consider the operator $-d^2/dx^2+|x|$ on $(-L,L)$. Let $\lambda_L$ be its first Dirichlet eigenvalue and let $\psi_L>0$ be its first Dirichlet eigenfunction, normalized by $\psi_L(0)=1$.

\begin{theorem}[Monotone convergence of Airy eigenpairs]\label{thm:Airy-monotone}
As $L\to\infty$,
\[
        \lambda_L\downarrow\lambda_\infty.
\]
The first eigenfunctions are even, and after extension by zero outside $(-L,L)$,
\begin{equation}\label{eq:Airy-monotone}
        0\le\psi_{L_1}(x)\le\psi_{L_2}(x)\le\psi_\infty(x),
        \qquad 0<L_1<L_2,
\end{equation}
for every $x\in\R$.  Moreover, $\psi_L\uparrow\psi_\infty$ pointwise and locally in $C^2(\R)$.
\end{theorem}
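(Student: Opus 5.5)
We argue by domain monotonicity together with the maximum principle, and then identify the limit using Theorem~\ref{thm:Airy-full}. First, evenness of $\psi_L$: the potential $|x|$ and the interval $(-L,L)$ are symmetric, so $\psi_L(-x)$ is also a positive first eigenfunction. By simplicity of the first Dirichlet eigenvalue it is a multiple of $\psi_L$, and the normalization $\psi_L(0)=1$ forces equality. Second, monotonicity of eigenvalues: extension by zero embeds $H^1_0(-L_1,L_1)$ into $H^1_0(-L_2,L_2)$, and further into $D_\infty$, preserving the quadratic form. The Rayleigh principle therefore gives $\lambda_{L_1}\ge\lambda_{L_2}\ge\lambda_\infty$. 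For convergence $\lambda_L\to\lambda_\infty$, truncate $\psi_\infty$ by a cutoff $\chi_R$ with $\chi_R=1$ on $[-R,R]$, support in $[-2R,2R]$, and $|\chi_R'|\le 2/R$. The exponential decay of $\operatorname{Ai}$ makes the Rayleigh quotient of $\chi_R\psi_\infty$ converge to $\lambda_\infty$. Since $\lambda_L$ is monotone, the full limit follows.

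The main step is the pointwise ordering \eqref{eq:Airy-monotone}, which we prove by an ODE comparison argument on each half-line. By evenness it suffices to work on $[0,L]$, where $\psi_L'(0)=0$ and $\psi_L(0)=1$. Set $\phi=\psi_{L_2}$ and $\psi=\psi_{L_1}$, so that $-\psi''+(x-\lambda_{L_1})\psi=0$ and $-\phi''+(x-\lambda_{L_2})\phi=0$, with the same initial data at $0$. On any interval $[0,x_1)$ where both are positive, the Wronskian $W=\phi'\psi-\psi'\phi$ satisfies
\[
W'=\phi''\psi-\psi''\phi=(\lambda_{L_1}-\lambda_{L_2})\phi\psi\ge 0,\qquad W(0)=0,
\]
so $W\ge0$. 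Hence $(\phi/\psi)'=W/\psi^2\ge0$, which gives $\phi/\psi\ge1$, i.e.\ $\psi\le\phi$. The positivity interval of $\psi$ is $[0,L_1)$, and there $\phi\ge\psi>0$, so the comparison holds on all of $[0,L_1)$. Outside this interval $\psi=0\le\phi$. The same argument with $\phi=\psi_\infty$, which is positive on all of $\R$ by the Remark following Theorem~\ref{thm:Airy-full}, gives $\psi_L\le\psi_\infty$. This step is the delicate one: it requires that both functions are normalized at the common symmetric point $0$ with zero derivative, and we must make sure that $\psi$ cannot vanish before $\phi$, which the Wronskian argument does provide.

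Finally, for convergence of the eigenfunctions, monotonicity and the upper bound $\psi_\infty$ give a pointwise limit $\tilde\psi\le\psi_\infty$. A cleaner route is to use continuous dependence on parameters for the initial value problem
\[
y''=(x-\lambda)y,\qquad y(0)=1,\quad y'(0)=0.
\]
On $[0,R]$ the solution depends smoothly on $\lambda$, and $\psi_L$ is exactly this solution with $\lambda=\lambda_L$ (restricted to $[0,L]$), while $\psi_\infty$ is the solution with $\lambda=\lambda_\infty$. Since $\lambda_L\to\lambda_\infty$, we obtain $\psi_L\to\psi_\infty$ in $C^2([0,R])$, with $C^2$ control coming from the equation itself. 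By evenness this gives local $C^2(\R)$ convergence, since $\psi_L''$ is continuous at $0$ because $|x|$ is continuous. Combined with monotonicity, the convergence is $\psi_L\uparrow\psi_\infty$.
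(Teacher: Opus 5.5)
Your proposal is correct and follows essentially the same route as the paper. It uses domain monotonicity of the Rayleigh quotient plus truncated test functions for $\lambda_L\downarrow\lambda_\infty$, and the Wronskian/ratio comparison from the common data $\psi(0)=1$, $\psi'(0)=0$ for the ordering (the paper uses $\psi_{L_1}/\psi_{L_2}$, you use its reciprocal).

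The only real deviation is the last step. You obtain $C^2_{\mathrm{loc}}$ convergence from continuous dependence on $\lambda$ of the initial value problem $y''=(x-\lambda)y$, $y(0)=1$, $y'(0)=0$. This is more explicit than the paper's appeal to monotonicity plus the equation, because it identifies the limit as $\psi_\infty$ directly.
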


\begin{proof}
The Rayleigh quotient is monotone in the domain, so $\lambda_{L_2}<\lambda_{L_1}$ for $L_2>L_1$, where the strictness follows from the strong maximum principle. The Rayleigh principle also yields $\lambda_\infty<\lambda_L$. Conversely, testing with compactly supported smooth functions whose Rayleigh quotient is $\lambda_\infty+\varepsilon$, we have $\limsup_{L\to\infty}\lambda_L\le\lambda_\infty+\varepsilon$. Thus, $\lambda_L\downarrow\lambda_\infty$.

Since $\psi_L\ge 0$ is even and $\psi_L(0)=1$, it suffices to show $\psi_{L_1}(x)<\psi_{L_2}(x)$ on $(0,L_2)$ for any $L_1<L_2$. Consider the function 
\[
\Psi(x) = \frac{\psi_{L_1}(x)}{\psi_{L_2}(x)}.
\]
Then
\[
\Psi^\prime(x) = \frac{\psi_{L_1}^\prime(x)\psi_{L_2}(x)-\psi_{L_1}(x)\psi_{L_2}^\prime(x)}{\psi_{L_2}(x)^2},
\]
and $\Psi(0) = 1$, $\Psi^\prime(0) = 0$ and $\Psi(x) = 0$ for $L_1\le x<L_2$. Since 
\[
(\psi_{L_1}^\prime(x)\psi_{L_2}(x)-\psi_{L_1}(x)\psi_{L_2}^\prime(x))' = \bracket{\lambda_{L_2}-\lambda_{L_1}}\psi_{L_1}(x)\psi_{L_2}(x)<0,
\]
it follows that $\Psi^\prime(x) < 0$ and $\Psi(x) < \Psi(0) = 1$ for $0< x\le L_1$. Hence, $\psi_{L_1}(x)<\psi_{L_2}(x)$ on $(0,L_2)$ for any $L_1<L_2$, which gives the monotonicity in \eqref{eq:Airy-monotone}. An analogous argument, using $\lambda_\infty<\lambda_L$, gives $\psi_{L}(x)\le\psi_\infty(x)$. Finally, together with the equation $-\psi_L''+|x|\psi_L=\lambda_L\psi_L$, the monotonicity~\eqref{eq:Airy-monotone} implies local $C^2$ convergence of $\psi_L$ to $\psi_\infty$.
\end{proof}

\subsection{A convex Schr\"odinger potential with unscaled \texorpdfstring{$1/2$}{1/2} failure}\label{subsec:Airy-failure}

Now we are ready to show that for the convex potential $V(x)=|x|$, unscaled $1/2$-logconcavity fails on
sufficiently large intervals.
\begin{theorem}[The Airy potential $V(x)=|x|$ gives unscaled failure]\label{thm:Airy-failure}
For all sufficiently large $L$, the first Dirichlet eigenfunction $\psi_L$ of $-d^2/dx^2+|x|$ on $(-L,L)$, normalized by $\psi_L(0)=1$, has the property that
\[
        x\longmapsto\sqrt{-\log\psi_L(x)}
\]
is not convex. 
\end{theorem}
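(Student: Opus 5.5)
We argue by passing to the limit $L\to\infty$. By the one-dimensional criterion \eqref{eq:half-criterion-1d} with $a=0$, it suffices to find a point $x_*$ with
\[
Q_L(x_*):=2v_L(x_*)v_L''(x_*)-v_L'(x_*)^2<0,\qquad v_L:=-\log\psi_L,
\]
for all large $L$. Theorem~\ref{thm:Airy-monotone} gives $\psi_L\to\psi_\infty$ locally in $C^2(\R)$. Since $\psi_\infty>0$ (Remark after Theorem~\ref{thm:Airy-full}), on any compact set the $v_L$ converge in $C^2$ to $v_\infty:=-\log\psi_\infty$. Hence $Q_L(x_*)\to Q_\infty(x_*):=2v_\infty v_\infty''-(v_\infty')^2$ at $x_*$. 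So it is enough to exhibit a single point where $Q_\infty<0$.

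To find such a point, work on $x>0$, where $\psi_\infty(x)=\operatorname{Ai}(x-\lambda_\infty)/\operatorname{Ai}(-\lambda_\infty)$. Write $h=v_\infty'=-\psi_\infty'/\psi_\infty$, which satisfies the Riccati equation
\[
h'=h^2+\lambda_\infty-x.
\]
We also have $v_\infty(0)=0$, $h(0)=0$, and $v_\infty''(0)=\lambda_\infty>0$. The cleanest candidate is the point $x=\lambda_\infty$, where the Airy argument vanishes. There $v_\infty''=h^2$, so
\[
Q_\infty(\lambda_\infty)=h(\lambda_\infty)^2\bigl(2v_\infty(\lambda_\infty)-1\bigr).
\]
Thus we need $v_\infty(\lambda_\infty)<1/2$, i.e.\ $\operatorname{Ai}(0)/\operatorname{Ai}(a_1')>e^{-1/2}$. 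With $\operatorname{Ai}(0)\approx0.3550$ and $\operatorname{Ai}(a_1')\approx0.5357$, the ratio is $\approx0.663>0.6065$, so $Q_\infty(\lambda_\infty)<0$.

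If a less numerical argument is preferred, one can instead look at large $x$. There $v_\infty\sim\tfrac23x^{3/2}$, $v_\infty'\sim x^{1/2}$, and $v_\infty''\sim\tfrac12x^{-1/2}$, giving
\[
Q_\infty\sim\tfrac23x-x<0.
\]
This asymptotic route is more robust. It uses the standard expansion $-\operatorname{Ai}'(z)/\operatorname{Ai}(z)=z^{1/2}+\tfrac{1}{4z}+O(z^{-5/2})$ and its derivative, together with $v_\infty(x)=\tfrac23(x-\lambda_\infty)^{3/2}+O(\log x)$. I would present this version, fixing one large $x_*$ where $Q_\infty(x_*)<0$, and note the explicit point $x=\lambda_\infty$ as a numerical confirmation.

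The main obstacle is justifying these asymptotics rigorously, in particular controlling the error in $v_\infty''$, which is of smaller order than the leading terms. I would handle it through the Riccati equation. Comparison shows that $h(x)-\sqrt{x-\lambda_\infty}$ is $O(x^{-1})$; since $h$ stays positive with the right growth, it must track the stable branch. Then $h'=h^2+\lambda_\infty-x$ gives $v_\infty''=O(x^{-1/2})$, which is enough for $2v_\infty v_\infty''\le(\tfrac23+o(1))x$. The final step is then immediate: for $L$ large, $Q_L(x_*)<0$ by $C^2$ convergence, so $\sqrt{-\log\psi_L}$ fails to be convex near $x_*$.
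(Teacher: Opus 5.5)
Your proposal is correct and is essentially the paper's proof: local $C^2$ convergence $\psi_L\to\psi_\infty$ reduces the claim to finding one point with $Q_\infty<0$, and the paper finds it from the same large-$x$ Airy asymptotics, $Q_\infty=-\frac13(x-\lambda_\infty)+o(x)$; your explicit check at $x=\lambda_\infty$, where $v_\infty''=h^2$ and $Q_\infty=h^2(2v_\infty-1)\approx-0.09$, is a valid extra confirmation. One small correction: a bare bound $v_\infty''=O(x^{-1/2})$ does not give $2v_\infty v_\infty''\le(\frac23+o(1))x$, so you need the sharp constant $v_\infty''=h^2-(x-\lambda_\infty)=\frac12(x-\lambda_\infty)^{-1/2}+O(x^{-2})$, which the expansion with the $\frac1{4z}$ term that you quote does supply.
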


\begin{proof}
For $x\ge0$, the full-line ground state is
\[
        \psi_\infty(x)=
        \frac{\operatorname{Ai}(x-\lambda_\infty)}
             {\operatorname{Ai}(-\lambda_\infty)}.
\]
Let $v_\infty=-\log\psi_\infty\ge 0$.  The standard Airy asymptotics give, as $x\to+\infty$,
\[
        v_\infty(x)=\frac23(x-\lambda_\infty)^{3/2}+\frac14\log (x-\lambda_\infty)+O(1),
\]
\[
        v_\infty'(x)=(x-\lambda_\infty)^{1/2}+O(x^{-1}),
        \qquad
        v_\infty''(x)=\frac12(x-\lambda_\infty)^{-1/2}+O(x^{-2}).
\]
Consequently,
\begin{equation}\label{eq:QAiry-negative}
        Q_\infty(x):=2v_\infty(x)v_\infty''(x)-
        (v_\infty'(x))^2
        =-\frac13(x-\lambda_\infty)+o(x)<0
\end{equation}
for all sufficiently large $x$.  Choose $x_*>0$ such that $Q_\infty(x_*)<0$.  By Theorem \ref{thm:Airy-monotone}, $\psi_L\to\psi_\infty$ locally in $C^2$, so $v_L=-\log\psi_L$ converges to $v_\infty$ in $C^2$ near $x_*$.  Hence, for large $L$,
\[
        Q_L(x_*):=2v_L(x_*)v_L''(x_*)-(v_L'(x_*))^2<0,
\]
which implies $(\sqrt{v_L})''(x_*)<0$. Thus, $\sqrt{-\log\psi_L}$ is not convex.
\end{proof}

\begin{corollary}[Smooth convex counterexamples]\label{cor:smooth-counter}
There exist $L>0$ and a convex potential $V\in C^\infty([-L,L])$ such that the first Dirichlet ground state $\psi_V$ on $(-L,L)$ is not unscaled $1/2$-logconcave.
\end{corollary}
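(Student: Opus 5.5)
The plan is to regularize the Airy potential $|x|$ near the origin by a smooth convex potential and transfer the failure of convexity proved in Theorem~\ref{thm:Airy-failure} by a perturbation argument.

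\textbf{Step 1: fix the length.} Using Theorem~\ref{thm:Airy-failure}, fix $L$ large and a point $x_*\in(0,L)$ with
\[
Q_L(x_*)=2v_L(x_*)v_L''(x_*)-(v_L'(x_*))^2<0,
\]
where $v_L=-\log\psi_L$ and $\psi_L$ is the Dirichlet ground state of $-d^2/dx^2+|x|$ on $(-L,L)$. From the proof of that theorem, $x_*$ can be taken bounded away from $0$, say $x_*\ge 1$.

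\textbf{Step 2: smooth convex approximants.} Set $V_\varepsilon(x)=\sqrt{x^2+\varepsilon^2}$. It is $C^\infty$, even and convex, and $\|V_\varepsilon-|x|\|_{L^\infty}\le\varepsilon$. Alternatively one may mollify $|x|$ by an even mollifier, which keeps convexity and changes the potential only on $(-\varepsilon,\varepsilon)$ up to a uniformly small error. Let $\lambda_\varepsilon$ and $\psi_\varepsilon$ be the first Dirichlet eigenpair on $(-L,L)$, with $\psi_\varepsilon$ even and normalized by $\psi_\varepsilon(0)=1$.

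\textbf{Step 3: convergence as $\varepsilon\to0$.}
\begin{itemize}
\item The Rayleigh quotients differ by at most $\varepsilon$, so $|\lambda_\varepsilon-\lambda_L|\le\varepsilon$.
\item By simplicity of the first eigenvalue, the $L^2$-normalized ground states converge in $H^1_0$, hence uniformly.
\item The normalizations converge as well, so $\psi_\varepsilon\to\psi_L$ uniformly on $[-L,L]$.
\item Near $x_*$ the equation $\psi''=(V_\varepsilon-\lambda_\varepsilon)\psi$ has right-hand side converging uniformly. This gives $C^2$ convergence on a neighborhood of $x_*$; indeed the $C^2$ bounds hold on the whole interval.
\end{itemize}
An ODE shortcut is also available: $\psi_\varepsilon$ solves the initial value problem $\psi(0)=1$, $\psi'(0)=0$. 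Continuous dependence on the potential and on $\lambda$ then gives $C^2$ convergence on $[0,x_*+\delta]$, provided $\lambda_\varepsilon\to\lambda_L$.

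\textbf{Step 4: conclusion.} Since $\psi_L(x_*)>0$, we have $v_\varepsilon=-\log\psi_\varepsilon\to v_L$ in $C^2$ near $x_*$. Hence $Q_\varepsilon(x_*)<0$ for $\varepsilon$ small. By the one-dimensional criterion \eqref{eq:half-criterion-1d} with $a=0$, this means $(\sqrt{v_\varepsilon})''(x_*)<0$. So $\psi_{V_\varepsilon}$ is not unscaled $1/2$-logconcave, and $V=V_\varepsilon$ is the required potential.

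One point needs care: the paper's notion of $\alpha$-logconcavity assumes $\max\psi=1$. For an even convex potential, $\psi_\varepsilon$ is log-concave (Brascamp--Lieb, or \cite{AndrewsClutterbuck2011}) and even, so its maximum is at $0$ and the normalizations agree.

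The main obstacle is Step 3, namely obtaining $C^2$ convergence at $x_*$ rather than only $L^2$ convergence. The ODE-based argument from the origin, combined with $\lambda_\varepsilon\to\lambda_L$, should handle it cleanly.
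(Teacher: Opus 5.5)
Your proposal is correct and follows the paper's own proof. The paper also fixes $L$ from Theorem~\ref{thm:Airy-failure}, regularizes with $V_\varepsilon(x)=\sqrt{x^2+\varepsilon^2}$, and invokes simplicity of the first eigenvalue to get $C^2_{\rm loc}$ convergence of the normalized eigenfunctions, so that $Q_\varepsilon(x_*)<0$ persists for small $\varepsilon$. Your extra details fill in steps the paper leaves implicit: the bound $|\lambda_\varepsilon-\lambda_L|\le\varepsilon$, the shooting argument from $\psi(0)=1$, $\psi'(0)=0$, and the check that the maximum sits at $0$ so the normalization matches the definition.
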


\begin{proof}
Choose $L$ as in Theorem \ref{thm:Airy-failure} and replace $|x|$ by
\[
        V_\varepsilon(x)=\sqrt{x^2+\varepsilon^2}.
\]
Then $V_\varepsilon$ is smooth and convex, and $V_\varepsilon\to |x|$ uniformly on $[-L,L]$ as $\varepsilon\to 0$. Since the first eigenvalue is simple, the normalized eigenfunctions converge in $C^2_{\rm loc}(-L,L)$.  So for all sufficiently small $\varepsilon>0$, $\sqrt{-\log\psi_{V_\varepsilon}}$ is not convex.
\end{proof}

\subsection{An affine log-concave weight: unscaled failure and scaled recovery}\label{subsec:affine-example}

Our second class of counterexamples concerns weighted Laplacians
with affine log-concave weights.  Here, unscaled $1/2$-logconcavity also
fails, but the scaled version is restored under an explicit condition
on the constant $a$.

\begin{theorem}[Affine log-concave weight: unscaled failure]\label{thm:affine-failure}
Let $I=(0,\pi)$, let $c\in\R\setminus\{0\}$, and consider
\[
        d\mu_c=e^{-cx}\dd x,
        \qquad
        \Delta_c=\frac{d^2}{dx^2}-c\frac d{dx}.
\]
The positive first Dirichlet eigenfunction is
\begin{equation}\label{eq:affine-eigenfunction}
        \psi_c(x)=A_ce^{cx/2}\sin x,
\end{equation}
where $A_c$ is chosen so that $\max_I\psi_c=1$.  The function $\sqrt{-\log \psi_c}$ is not convex on $I$.
\end{theorem}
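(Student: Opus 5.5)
First, I verify that \eqref{eq:affine-eigenfunction} gives the ground state. The operator is $-\Delta_c\psi=\lambda\psi$, that is, $-\psi''+c\psi'=\lambda\psi$. Substituting $\psi=e^{cx/2}\phi$ removes the drift and yields $-\phi''+\frac{c^2}{4}\phi=\lambda\phi$ with $\phi(0)=\phi(\pi)=0$. Hence the first eigenpair is $\phi=\sin x$ with $\lambda=1+c^2/4$. Positivity on $I$ together with simplicity of the first eigenvalue confirms that $\psi_c=A_ce^{cx/2}\sin x$ is the normalized ground state.

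Next, set $v=-\log\psi_c=-\log A_c-\frac{c}{2}x-\log\sin x$. Then
\[
v'=-\tfrac c2-\cot x,\qquad v''=\csc^2x .
\]
By criterion \eqref{eq:half-criterion-1d} with $a=0$, it suffices to find a point $x\in I$ where
\[
Q(x):=2v(x)v''(x)-v'(x)^2<0.
\]
The natural candidate is the maximum point $x_0$ of $\psi_c$. There $v(x_0)=0$ and $v'(x_0)=0$, so $Q(x_0)=0$, and the test degenerates. I will therefore expand to the next orders.

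At $x_0$ we have $\cot x_0=-c/2$. Since $v(x_0)=v'(x_0)=0$,
\[
Q'(x)=2v'v''+2vv'''-2v'v''=2vv''' ,
\]
so $Q'(x_0)=0$. Differentiating again gives $Q''=2v'v'''+2vv''''$, hence $Q''(x_0)=0$. One more derivative gives $Q'''(x_0)=2v''(x_0)v'''(x_0)$. Now $v'''=-2\csc^2x\cot x$, so at $x_0$,
\[
v'''(x_0)=c\,\csc^2x_0\neq 0\qquad (c\ne0).
\]
Since $v''(x_0)=\csc^2x_0>0$, it follows that $Q'''(x_0)=2c\csc^4x_0\neq0$. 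Consequently $Q$ changes sign at $x_0$: $Q(x)\sim \frac{Q'''(x_0)}{6}(x-x_0)^3$. For $c>0$, $Q<0$ just to the left of $x_0$; for $c<0$, $Q<0$ just to the right.

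Finally, at a point where $Q<0$ and $v>0$, we have $(\sqrt v)''=Q/(4v^{3/2})<0$, so $\sqrt{-\log\psi_c}$ is not convex. The main obstacle is exactly the degeneracy $Q(x_0)=Q'(x_0)=Q''(x_0)=0$. The whole argument rests on the cubic term, and that term is nonzero precisely because the asymmetry $c\ne0$ makes $v'''(x_0)\neq0$. I will double-check the sign of $v'''$ through $\frac{d}{dx}\csc^2x=-2\csc^2x\cot x$; the conclusion holds either way, since only $Q'''(x_0)\neq0$ matters.
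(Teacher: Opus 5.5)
Your proposal is correct and follows essentially the same route as the paper. Both arguments localize at the maximum point $x_0$ (where $\cot x_0=-c/2$) and use $v'''(x_0)=c\,(1+c^2/4)\neq 0$ to show that $(\sqrt{v})''<0$ on one side of $x_0$. The difference is only bookkeeping: the paper Taylor-expands $\sqrt{v_c}$ directly to get one-sided second derivatives $\pm B/(3\sqrt{2A})$, whereas you expand $Q=2vv''-(v')^2$ to third order and apply $(\sqrt v)''=Q/(4v^{3/2})$ at points where $v>0$. This yields the same limiting values.
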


\begin{proof}
A direct computation gives
\[
        -\Delta_c(e^{cx/2}\sin x)
        =\left(1+\frac{c^2}{4}\right)e^{cx/2}\sin x,
\]
so \eqref{eq:affine-eigenfunction} gives the first Dirichlet eigenfunction.  Setting $v_c=-\log \psi_c$, we obtain
\[
        v_c'(x)=-\frac c2-\cot x,
        \qquad
        v_c''(x)=\csc^2x,
        \qquad
        v_c'''(x)=-2\csc^2x\cot x.
\]
The unique maximum point $x_c$ of $\psi_c$ satisfies $v_c'(x_c)=0$, which gives $\cot x_c=-c/2$. Hence,
\[
        A:=v_c''(x_c)=1+\frac{c^2}{4}>0,
        \qquad
        B:=v_c'''(x_c)=c\left(1+\frac{c^2}{4}\right)\ne0.
\]
Since $\psi_c(x_c)=1$ yields $v_c(x_c)=0$, the Taylor expansion of
$v_c$ at $x_c$ with $y=x-x_c$ gives,
\[
        v_c(x_c+y)=\frac A2y^2+\frac B6y^3+O(y^4).
\]
Consequently,
\[
        \sqrt{v_c(x_c+y)}=
        \begin{cases}
        \sqrt{A/2}\,y+\dfrac{B}{6\sqrt{2A}}y^2+O(y^3),& y>0,\\[1.2ex]
        -\sqrt{A/2}\,y-\dfrac{B}{6\sqrt{2A}}y^2+O(|y|^3),& y<0.
        \end{cases}
\]
The one-sided second derivatives at the maximum are
\[
        (\sqrt{v_c})''(x_c^+)=\frac{B}{3\sqrt{2A}},
        \qquad
        (\sqrt{v_c})''(x_c^-)=-\frac{B}{3\sqrt{2A}}.
\]
One of them is negative because $B\ne0$.  Hence, $\sqrt{-\log \psi_c}$ cannot be convex.
\end{proof}

\begin{proposition}[The same affine example is scaled $1/2$-logconcave]\label{prop:affine-scaled}
For the eigenfunction $\psi_c$ in Theorem \ref{thm:affine-failure}, the scaled $1/2$-logconcavity property does hold.  More precisely, if
\[
        a\ge \frac12\bracket{1+\frac{c^2}{4}},
\]
then $\sqrt{a-\log \psi_c}$ is convex on $(0,\pi)$.
\end{proposition}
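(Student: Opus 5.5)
The plan is to apply the one-dimensional criterion \eqref{eq:half-criterion-1d} directly, using the explicit derivatives of $v_c=-\log\psi_c$ computed in the proof of Theorem~\ref{thm:affine-failure}. Since $w=\sqrt{a+v_c}$ is $C^2$ on $(0,\pi)$ and $a+v_c>0$, convexity of $w$ on the interval is equivalent to $w''\ge 0$. By \eqref{eq:Hessianidentity} with $\alpha=1/2$, this is in turn equivalent to
\[
  2(a+v_c)\,v_c''-(v_c')^2\ge 0 \qquad\text{on }(0,\pi).
\]

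First I use the normalization $\max_I\psi_c=1$. It gives $\psi_c\le 1$, hence $v_c\ge 0$. Because $v_c''=\csc^2x>0$, the term $2v_c v_c''$ is nonnegative and may simply be dropped. It therefore suffices to prove the stronger inequality
\[
  2a\csc^2 x\ \ge\ \Bigl(\frac c2+\cot x\Bigr)^2 \qquad\text{for all } x\in(0,\pi).
\]

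Next I multiply through by $\sin^2x>0$. This turns the inequality into
\[
  2a\ \ge\ \Bigl(\frac c2\sin x+\cos x\Bigr)^2 .
\]
By the Cauchy--Schwarz inequality applied to the vectors $(c/2,1)$ and $(\sin x,\cos x)$, the right-hand side is at most $1+c^2/4$, uniformly in $x$. Hence the hypothesis $a\ge\frac12(1+c^2/4)$ gives the inequality on all of $(0,\pi)$, and $\sqrt{a-\log\psi_c}$ is convex there.

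I do not expect a real obstacle; the only point needing care is the justification for discarding $2v_cv_c''$, which rests on $v_c\ge0$ and $v_c''>0$. As a consistency check, at the maximum point $x_c$ one has $v_c=0$ and $v_c'=0$, so the criterion holds trivially there, matching the failure mechanism of Theorem~\ref{thm:affine-failure} at $a=0$. The bound is also essentially sharp for this crude estimate: equality in Cauchy--Schwarz occurs at the point where $\tan x=c/2$, and dropping $v_c$ costs nothing near $x_c$.
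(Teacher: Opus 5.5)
Your proof is correct and follows the paper's argument exactly. You drop $2v_cv_c''\ge 0$ using $v_c\ge 0$ and $v_c''=\csc^2x$, then bound $(v_c')^2/v_c''=(\tfrac c2\sin x+\cos x)^2\le 1+\tfrac{c^2}{4}$; you merely name the Cauchy--Schwarz step that the paper leaves implicit.

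One caveat on your closing aside, which does not affect the proof. The Cauchy--Schwarz equality point ($\tan x=c/2$) lies exactly $\pi/2$ from $x_c$ (where $\tan x_c=-2/c$), and $v_c>0$ there, so dropping $v_c$ does cost something. Hence $a\ge\frac12(1+\frac{c^2}{4})$ is sufficient but not sharp for the actual criterion.
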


\begin{proof}
Using the one-dimensional criterion \eqref{eq:half-criterion-1d}, it is enough to check
\[
        2(a+v_c)v_c''-(v_c')^2\ge0.
\]
Since $v_c\ge0$ and $v_c''=\csc^2x$, we have
\[
        \frac{(v_c')^2}{v_c''}
        =\left(-\frac c2\sin x-\cos x\right)^2
        \le 1+\frac{c^2}{4}.
\]
The lower bound on $a$ yields
\[
        2(a+v_c)v_c''-(v_c')^2
        \ge 2av_c''-(v_c')^2\ge0,
\]
which completes the proof.
\end{proof}

\section*{Acknowledgments}
 We would like to thank Professors Andrea Colesanti and Paolo Salani for their suggestions and warm encouragement. The  third author is supported by NSF of Jiangsu Province No. BK20231309.

\section*{Declarations}

\noindent\textbf{Conflict of Interest}\ \ The authors declare that they have no conflict of interest.

\vspace{1em}
\noindent\textbf{Data Availability}\ \ Data sharing is not applicable to this article as no datasets were generated or analyzed during the current study.

\bibliographystyle{amsalpha}
\bibliography{references}

\end{document}